\newtheorem{example}{Example}
\newtheorem{remark}{Remark}
\newtheorem{lemma}{Lemma}
\newtheorem{proposition}{Proposition}
\newcommand\Prob{{\mathbb {P}}}
\newcommand\Q{{\bf Q}}
\newcommand\convLaw{\buildrel \mathcal{D} \over \longrightarrow}
\newcommand\approxLaw{\buildrel \mathcal{D} \over \approx}
\newcommand\vecX{{\bf X}}
\newcommand\field{\mathcal{F}}
\newcommand\alphabet{\mathcal{A}}
\newcommand\Gt{\widetilde{G}}
\newcommand{\Phic}{\Phi_{\collection}}
\newcommand{\Phitc}{\widetilde{\Phi}_{\collection}}
\newcommand{\collection}{\mathcal{C}}
\newcommand\normal{\mathcal{N}}
\newcommand\bfzero{{\bf 0}}
\newcommand\bfmu{\mbox{\boldmath $\mathbf\mu$}}
\newcommand\bfSigma{\mbox{\boldmath $\mathbf\Sigma$}}
\newcommand\indexingset{\mathcal{I}}
\newcommand\given{{\, | \, }}
\newcommand\E{{\mathbb {E}}}
\newcommand\V{{\mathbb {V}{\rm ar}}}
\newcommand\Cov{{\mathbb {C}{\rm ov}}}
\newtheorem{theorem}{Theorem}
\newcommand{\Nz}{N_{z}}
\newcommand{\fs}{f^{*}}
\newcommand{\A}{\mathcal{A}}
\newcommand{\vos}{v_{1}^{*}}
\newcommand{\vts}{v_{2}^{*}}
\newcommand{\vhs}{v_{3}^{*}}
\newcommand{\Pw}{\Prob(w)}
\newcommand{\Iwi}{I_{N_{z},T_\nu,w}}
\newcommand{\Iwj}{I_{N_{z},T_\kappa,w}}
\newcommand{\Astar}{\A^{*}}
\newcommand{\sumw}{\sum_{w \in \Astar}}
\begin{document}
\begin{center}
{\large \bf  On the variety of shapes 
in digital trees % insert title - use \\ if it requires more than one line.

\bigskip
Jeffrey Gaither\footnote{Mathematical Biosciences Institute, The Ohio State University,  Jennings Hall 3rd Floor, 1735 Neil Ave.  Columbus, OH~43210 U.S.A.,
  Email: \url{gaither.16@mbi.osu.edu}}
\quad Hosam Mahmoud\footnote{Department of Statistics, The George Washington
  University, 801 22nd Street, Washington, D.C.~20052 U.S.A., Email: \url{hosam@gwu.edu}}
\quad
Mark Daniel Ward\footnote{Department of Statistics, Purdue University, 150 North
  University Street, West~Lafayette, IN~47907--2067 U.S.A.,
  Email: \url{mdw@purdue.edu}}
\bigskip
}

\today
\end{center}

\begin{abstract}
We study the joint distribution of the number of occurrences of members of a collection of nonoverlapping
motifs in digital data. We deal with finite and countably infinite collections.
For infinite collections, 
the setting requires that we be very explicit about the specification
of the underlying measure-theoretic formulation.
We show that (under appropriate normalization) for such a collection,
any linear combination of 
the number of occurrences of each of the motifs in the data
has a limiting normal distribution. In many instances,
this can be interpreted in terms of the number of occurrences
of individual motifs: They have a multivariate normal distribution. 
The methods of proof include combinatorics on words, integral transforms, 
and poissonization.
\end{abstract}

\bigskip\noindent
{\bf Keywords:} Analysis of algorithms, random trees, digital trees, 
recurrence, functional equation, 
Mellin transform, poissonization, digital data,
combinatorics on words, similarity of strings, motif. 
                       
\bigskip\noindent
{\bf 2010 Mathematics Subject Classification:}
         Primary:    05C05,      % trees
                     60C05;      % Combinatorial probability 
         secondary:  68P05,      % Data structures
                     68P10,      % Searching and sorting
                     68P20.      % Information storage and retrieval
\section{Introduction}
\label{Sec:trie}
With all types of data and their supporting storage one is often interested in substructures.
In a text we are interested in the occurrence of certain words, such as cancerous
genes in DNA strands. 
%%We are concerned with digital data.
When digital data are stored in digital
trees we wish to identify the occurrence of certain tree shapes, which
we call \emph{motifs}.
Certain motifs may indicate particular properties of the digital records
stored, such as the prevalence of a certain disease in DNA data. Often,
 the presence of a particular
substructure is significant in the presence of certain other structures, 
such as %%for example
the alleles of cancer, which become more serious in the presence of
certain other alleles. 
So, we are interested in the \emph{joint occurrence} of members of a
collection of shapes in a given tree.   
There can also be applications
in data compression. When a certain small tree shape 
occurs multiple times in a large tree,
we can store the data in these smaller trees using a simpler format,
with only one pointer in each structure to their common tree shape. 
This allows us to store only one actual copy of each subtree shape.
 
We consider 
$m$-ary tries, which are trees arising from random
strings over an $m$-ary alphabet. 
The trie was introduced in \cite{Briandais,Fredkin} for information
re\emph{trie}val.
In addition to their use as data structures, 
tries support the operation of---and serve as models for---the analysis 
of several important algorithms, such as 
Radix Exchange Sort \cite{Knuth}, 
and Extendible Hashing \cite{Fagin}.
 
We assume that our digital data are 
infinite strings written using the symbols of an $m$-ary alphabet
$$\alphabet = \{a_1, \ldots, a_m\}.$$ 
In the sequel $\alphabet^*$ will denote the 
set of all finite-length words using 
%%that can be written using the
letters from $\alphabet$.
Each string is generated independently of all others 
by a probabilistic memoryless
source, i.e., the successive symbols of one 
string are generated independently,
and the probability of the source emitting 
the symbol $a_j \in \alphabet$ is $\Prob(a_j) = p_j$.
To avoid trivialities, we assume~$p_j > 0$ 
for $j = 1, \ldots, m$. 

Tries are a form of digital tree. 
They have a recursive definition. 
An $m$-ary trie on $n$ strings is empty, when $n=0$. 
Nonempty tries on $n \ge 1$ strings have
two types of nodes: internal (which serve the purpose of branching) 
and external (each of which 
contains one string). Each internal node has $m$ 
subtrees (some may be empty), corresponding to the symbols $a_1,
\ldots, a_m$ (respectively, from left to right).
An $m$-ary trie
on $n = 1$ strings holds one string;  the trie 
consists of an external node carrying that string.
An $m$-ary trie
on $n > 1$ strings consists of a root node of the internal type, and 
$m$ subtrees, which are themselves $m$-ary tries. All the strings
starting with $a_j$ go into the $j$th subtree. The recursion continues 
in the subtrees, with branching from the $\ell$th to $(\ell+1)$st
level according to the $(\ell+1)$st symbol
in the strings. Henceforth, we shall let the ``$m$'' be implicitly
understood, and often call an $m$-ary trie simply a trie. 
The number of strings in a trie is its \emph{size}.

Figure~\ref{Fig:example} instantiates the definition of 
tries with a quaternary trie, of size 12, constructed from twelve DNA strands, where 
the alphabet is the set of nucleotides $\{{\tt A}, {\tt C}, {\tt G},
{\tt T}\}$. The 12
strings in the external nodes are 
\begin{center}
\begin{minipage}{2.5in}
\begin{align*}
S_{1} &= {\tt CATCTGGTA}\ldots\\
S_{2} &= {\tt AATACTTCG}\ldots\\
S_{3} &= {\tt TGCCGAATC}\ldots\\
S_{4} &= {\tt TTTGTTCTA}\ldots\\
S_{5} &= {\tt AAGATGGAA}\ldots\\
S_{6} &= {\tt GCAAATCTG}\ldots
\end{align*}
\end{minipage}
\begin{minipage}{2.5in}
\begin{align*}
S_{7} &= {\tt GCTCTGGTA}\ldots\\
S_{8} &= {\tt AAACTGGTA}\ldots\\
S_{9} &= {\tt TGGTACCCG}\ldots\\
S_{10} &= {\tt GCATCTGGT}\ldots\\
S_{11} &= {\tt ATGTCTGGT}\ldots\\
S_{12} &= {\tt GCAGTGGTA}\ldots
\end{align*}
\end{minipage}
\end{center}

\begin{figure}[ht]
%\begin{figure}[!htbp]
\begin{center}
\begin{tikzpicture}
  [level distance=16mm,
    level 1/.style={sibling distance=30mm},
    level 2/.style={sibling distance=8mm},
     level 3/.style={sibling distance=8mm},
     level 4/.style={sibling distance=8mm}
    ]
\tikzset{every node/.style={minimum size = 8, inner sep=0}}  
\node[circle,draw,fill = black]{\phantom{.}} 
  child[sibling distance=20mm]{node[circle, draw, fill = black]{\phantom{.}}
    child{node[circle, draw, fill = black]{\phantom{.}}
      child{node[minimum size = 15, inner sep=0, draw]{$S_8$}}
      child{node[minimum size = 15, inner sep=0, draw, dashed]{} edge from parent [dashed]}
      child{node[minimum size = 15, inner sep=0, draw]{$S_5$}}
      child{node[minimum size = 15, inner sep=0, draw]{$S_2$}}
    }
    child{node[minimum size = 15, inner sep=0, draw, dashed]{} edge from parent [dashed]}
    child{node[minimum size = 15, inner sep=0, draw, dashed]{} edge from parent [dashed]}
    child{node[minimum size = 15, inner sep=0, draw]{$S_{11}$}}
  }
  child[sibling distance=15mm]{node[minimum size = 15, inner sep=0, draw]{$S_1$}} 
  child[sibling distance=35mm]{node[circle, draw,  fill = black]{$\phantom{.}$}
    child{node[minimum size = 15, inner sep=0, draw, dashed]{} edge from parent [dashed]} 
    child{node[circle, draw,  fill = black]{$\phantom{.}$}
      child{node[circle, draw,  fill = black]{\phantom{.}}
        child{node[minimum size = 15, inner sep=0, draw]{$S_6$}}
        child{node[minimum size = 15, inner sep=0, draw, dashed]{} edge from parent [dashed]}
        child{node[minimum size = 15, inner sep=0, draw]{$S_{12}$}}
        child{node[minimum size = 15, inner sep=0, draw]{$S_{10}$}}
      }
      child{node[minimum size = 15, inner sep=0, draw, dashed]{} edge from parent [dashed]}
      child{node[minimum size = 15, inner sep=0, draw, dashed]{} edge from parent [dashed]}
      child{node[minimum size = 15, inner sep=0, draw]{$S_7$}}
    } 
    child{node[minimum size = 15, inner sep=0, draw, dashed]{} edge from parent [dashed]} 
    child{node[minimum size = 15, inner sep=0, draw, dashed]{} edge from parent [dashed]} 
  } 
  child[sibling distance=35mm]{node[circle, draw,  fill = black]{\phantom{.}}
    child{node[minimum size = 15, inner sep=0, draw, dashed]{} edge from parent [dashed]}
    child{node[minimum size = 15, inner sep=0, draw, dashed]{} edge from parent [dashed]}
    child{node[circle, draw,  fill = black]{\phantom{.}}
      child{node[minimum size = 15, inner sep=0, draw, dashed]{} edge from parent [dashed]}
      child{node[minimum size = 15, inner sep=0, draw]{$S_3$}}  
      child{node[minimum size = 15, inner sep=0, draw]{$S_9$}} 
      child{node[minimum size = 15, inner sep=0, draw, dashed]{} edge from parent [dashed]}    
    }
    child{node[minimum size = 15, inner sep=0, draw]{$S_4$}}    
  };
\end{tikzpicture} 
\end{center}
  \caption{Example of a quaternary trie of size 12 for DNA data.}
  \label{Fig:example}
%\end{figure}   
\end{figure}
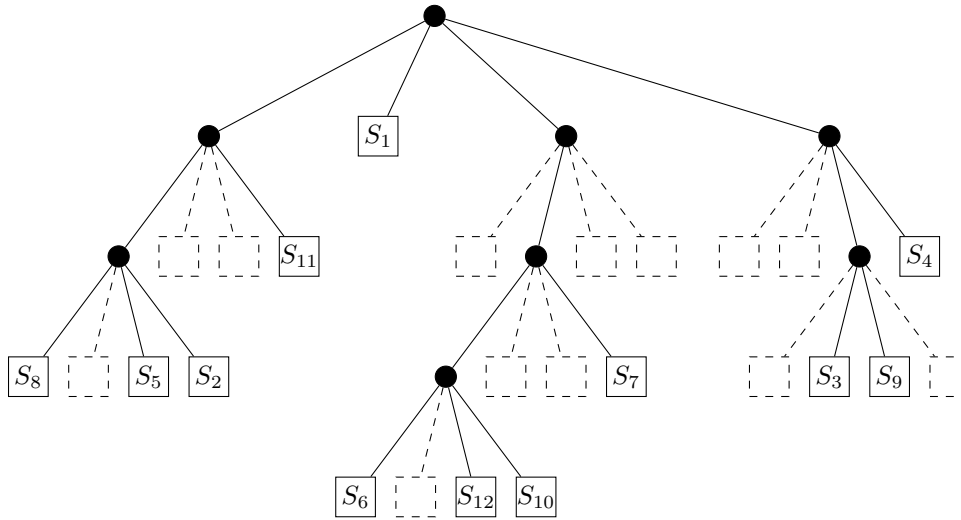

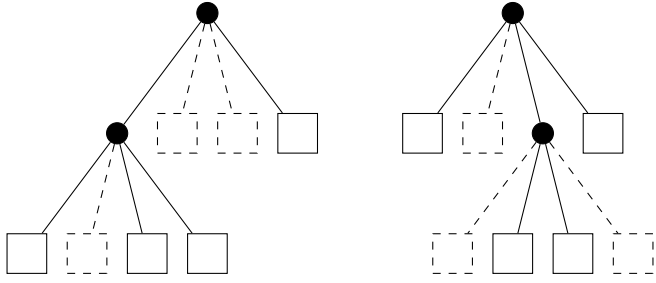
\begin{figure}
%\begin{figure}[!htbp]
\begin{center}
\begin{tikzpicture}
  [level distance=16mm,
    level 1/.style={sibling distance=8mm},
    level 2/.style={sibling distance=8mm},
    ]
\tikzset{every node/.style={minimum size = 8, inner sep=0}}  
\node[circle, draw,  fill = black]{$\phantom{.}$}
      child{node[circle, draw,  fill = black]{\phantom{.}}
        child{node[minimum size = 15, inner sep=0, draw]{}}
        child{node[minimum size = 15, inner sep=0, draw, dashed]{} edge from parent [dashed]}
        child{node[minimum size = 15, inner sep=0, draw]{}}
        child{node[minimum size = 15, inner sep=0, draw]{}}
      }
      child{node[minimum size = 15, inner sep=0, draw, dashed]{} edge from parent [dashed]}
      child{node[minimum size = 15, inner sep=0, draw, dashed]{} edge from parent [dashed]}
      child{node[minimum size = 15, inner sep=0, draw]{}};
\end{tikzpicture} 
\hskip 1cm
\begin{tikzpicture}
  [level distance=16mm,
    level 1/.style={sibling distance=8mm},
    level 2/.style={sibling distance=8mm},
    ]
\tikzset{every node/.style={minimum size = 8, inner sep=0}}  
  \node[circle, draw,  fill = black]{\phantom{.}}
        child{node[minimum size = 15, inner sep=0, draw]{}
          edge from parent}
    child{node[minimum size = 15, inner sep=0, draw, dashed]{} edge from parent [dashed]}
    child{node[circle, draw,  fill = black]{\phantom{.}}
      child{node[minimum size = 15, inner sep=0, draw, dashed]{} edge from parent [dashed]}
      child{node[minimum size = 15, inner sep=0, draw]{}}  
      child{node[minimum size = 15, inner sep=0, draw]{}} 
      child{node[minimum size = 15, inner sep=0, draw, dashed]{} edge from parent [dashed]}    
    }
    child{node[minimum size = 15, inner sep=0, draw]{}};
\end{tikzpicture}
\end{center}
  \caption{Two nonoverlapping DNA motifs of size 4.  The motif on the left
    occurs twice in Figure~\ref{Fig:example}; their roots are the parents of
    $S_7$ and $S_{11}$.  
    The motif on the left corresponds to the collection of strings
    $\{ {\tt AA\ldots}, {\tt AG\ldots}, {\tt AT\ldots}, {\tt T\ldots} \}$.
    The second motif does not occur in
    Figure~\ref{Fig:example}. %%; its root is the parent of $S_4$.
    The second motif corresponds to the collection of strings
    $\{{\tt A\ldots}, {\tt GC\ldots}, {\tt GG\ldots}, {\tt T\ldots} \}$.}
  \label{Fig:motifs}
%\end{figure}   
\end{figure}

The rest of this paper is organized as follows. 
In Section~\ref{Sec:Technical},
we lay out the general setup and the scope of the investigation.
In Section~\ref{Sec:Results}, we present
the main results. In Section~\ref{Sec:ProbabilitySpace},
we give the measure-theory formulation by giving
a probability space on which all the random variables in the paper
are formally defined.
In Section~\ref{Sec:poissonization}, we take up poissonization. 
In Section~\ref{Sec:Proofs}, we present proofs. The proofs are structured in subsections:
Subsection~\ref{Subsec:mean} is for the derivation of the mean; 
Subsection~\ref{Subsec:var} is
for the derivation of the variance, and is followed by several
subsections dealing with technical details: Mellin transform
and some motivating
words about this tool (Subsection~\ref{Mellintransformsection}), its existence domain~(Subsection~\ref{Fundamentalstripsection}),
variance asymptotics~(Subsection~\ref{AsymptoticsofVsection}) and the covariance structure (Subsection~\ref{Subsec:Covar}).
The moment generating function of the univariate 
linear combination is dealt with in Subsection~\ref{Subsec:char},
where a recurrence is given.
In Subsection~\ref{Susbec:limit} we derive a Gaussian limit for the distribution of the combined occurrences of an arbitrary linear combination of motifs,  which we then discuss  in 
%two of three 
examples in Section~\ref{Sec:examples} (writing one subsection for each example).

A similar investigation has been carried out in~\cite{Gopaladesikan}
on recursive trees, but it required a rather different set of probabilistic tools.
 There are many other examples in the literature about 
pattern counting in other random tree structures.  We mention only a
few here.
P.~Flajolet, X.~Gourdon, and C.~Mart{\'\i}nez~\cite{FGM1997} investigated 
subtrees on the fringe of the binary search tree, of a certain size
but not a certain shape.
J.~Fill \cite{Fill} also has studied a distribution on the set of binary search
trees, in the context of a random permutation model.
In addition to identifying patterns in trees, 
a recent paper by Gopaladesikan, Wagner, and
Ward~\cite{ForbiddenMotifs} considers missing patterns in trees.
It would be impractical to give a full survey of the myriad papers
that have results about patterns in random trees.

\section{Technical development}
\label{Sec:Technical}
We assume that a (random) $m$-ary trie is built from $n$ random strings.
For a given motif (trie shape) $T$, let
$X_{n,T}$ count the number of occurrences of~$T$ on the fringe of
a random trie of size $n$. 
By occurrence on the fringe we mean that $T$ coincides in shape with a
maximal rooted subtree of the trie, in the sense that the subtree does
not contain a subtree with more nodes than in $T$. 
 
When the motif $T$ is the trie on the left-hand side of Figure~\ref{Fig:motifs}, 
there are $X_{12, T} = 2$ occurrences of it
in the trie of Figure~\ref{Fig:example}. 
When the motif $\widetilde T$ is the trie on the right-hand side of Figure~\ref{Fig:motifs}, 
there are $X_{12, \widetilde T} = 0$ occurrences of it
in the trie of Figure~\ref{Fig:example}.
The roots of the two
occurrences of $T$ are the parents of $S_7$ and $S_{11}$.
%The root of the one
%occurrence of $\widetilde{T}$ is the parent of $S_4$.
In all figures in this paper,
empty subtrees are shown as dashed external nodes,
connected to their parents via dashed edges.

Let $\indexingset$ be an indexing set, 
of cardinality at most $\aleph_0$.
Let 
$$\collection  = \{T_\nu \given \nu \in \indexingset \}$$
be a given collection of motifs. 
We say that two motifs are \emph{nonoverlapping}, if neither 
appears as a subtree in another, and 
we call a collection of motifs 
a \emph{collection of nonoverlapping motifs}, if its members are
pairwise nonoverlapping.  For instance, neither of the two motifs in
Figure~\ref{Fig:motifs} appears as a subtree of the other, so this is
a collection of two nonoverlapping motifs.

In many applications such
a collection will be finite, but our presentation covers cases of
countably infinite collections, too. 
Countably infinite nonoverlapping collections arise
naturally in many applications, such as the case discussed 
in the following example.

\begin{example}
In~\cite{Ward} the average of the number of ``$\tau$--cousins,"
which are any tries of size~$\tau$ on the fringe of a random trie, was found. 
%In the present paper, we consider 
%the joint distribution of $X_{n,T_\nu}$, for $\nu \in \indexingset$,
%in random tries. 
In the notation of the present paper, 
if $\mathcal{C}$ denotes the collection of all motifs corresponding to
$\tau$-cousins, then the number of $\tau$--cousins in a trie is
$\sum_{T_\nu \in \mathcal{C}} X_{n, T_\nu}$.
In particular, we note that there is a countably infinite number of
$\tau$-cousins.  Thus, we can use an indexing set~$\mathcal{I}$ that is in
one-to-one correspondence with the positive natural numbers~$\mathbb{N}$.
%This notation is the one we prefer in this paper.  With this view, we
%can equivalently write
%$$\sum_{\nu \in \indexingset} X_{n, T_\nu}$$
%to denote the number of $\tau$--cousins in a trie.
\end{example}

A trie is basically a correspondence between 
a set of strings and a tree structure.
For $n\geq 2$, if $(W_{1},\ldots,W_{n})$ is an ordered $n$-tuple of
words of finite length (i.e., $W_{j}\in\alphabet^{\geq 1}$), we say that 
$(W_{1},\ldots,W_{n})$ has the \emph{trie property} if, for each $i$:
(1) $W_{i}$ is not a prefix of any of the other $W_{j}$'s, and (2) if
the last character of $W_{i}$ is removed, it becomes a prefix of at
least one of the other $W_{j}$'s. 
(In the case $n=1$, this must have simply $W_{1}=\varepsilon$, namely,
the empty word, has the \emph{trie property}.)

A trie with $n$ leaves always uniquely corresponds to a set of $n$
strings with the \emph{trie property}.  For example, the 12-tuple of strings
that induces the trie displayed in Figure~\ref{Fig:example} is:
$$(W_{1},\ldots,W_{12}) = ({\tt C},{\tt AAT},{\tt TGC},{\tt TT},{\tt AAG},{\tt GCAA},{\tt GCT},{\tt AAA},{\tt TGG},{\tt GCAT},{\tt AT},{\tt GCAG}).$$

In our results, we will utilize the data entropy function
$$h = h(p_1, \ldots, p_m) = - \sum_{j=1}^m p_j \ln p_j.$$
Also, we use $\Q(T)$ to denote the probability that a trie grown on
$\tau$ random strings coincides with a given fixed motif $T$ of size
$\tau$. Some authors call 
such a probability a \emph{shape functional}. See~\cite{Dobrow, Fill, Kapur} 
for counterpart definitions in $m$-ary search trees, and~\cite{Feng} 
for the counterpart in recursive trees. 
%and in digital trees (see Knuth, 1998). A similar investigation has been carried out in~\cite{Gopaladesikan}
%on recursive trees. 
These two classes of trees require probabilistic tools that are
rather different from the analytic probability tools utilized in this
paper for digital trees.
\begin{remark}
\label{Remark:shapefunctional}
Consider a trie grown from the $\tau$ strings
$$S_j =  a_{j,1} a_{j, 2} \ldots a_{j, L_j} \ldots, \qquad \mbox{for \ } j=1, \ldots, \tau,$$
where $L_j$ denotes the length of the shortest prefix that uniquely
identifies $S_j$ among $S_{1},\dots,S_{\tau}$.  
The same trie shape (motif) $T$ arises, regardless of the $\tau!$
possible orderings of insertion of these $\tau$ strings, so the leaves are labeled with $S_1, \ldots, S_\tau$. The
motif\thinspace~$T$ has shape functional 
$$\Q(T) = \tau! \prod_{j=1}^\tau \ \prod_{s=1}^{L_j}\Prob(a_{j, s}).$$
\end{remark}
\section{Results}
\label{Sec:Results}
The main results, in terms of averages and covariances, are given next.

\bigskip
\begin{proposition}
\label{Prop:mean}
Let $X_{n,T}$ be the number of occurrences 
of a fixed motif\thinspace~$T$ of  size $\tau$
in an $m$-ary trie constructed
over $n$ independent strings from an
$m$-ary alphabet $\{a_1, \ldots, a_m\}$, with probabilities $p_j > 0$, 
for $j=1, \ldots, m$. We then have
$$
\E[X_{n,T}] = {\Q(T) \over \tau(\tau-1) h} \, n
             +   \xi_T (n) n + o(n),
%       &=:& \mu_T(n) \, n + o(n),
$$
where $\xi_T$ is a possibly fluctuating function 
 with average value zero.  
\end{proposition}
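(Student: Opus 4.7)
The plan is to apply the standard analytic-combinatorics pipeline for trie statistics: derive a recurrence for $\mu_n:=\E[X_{n,T}]$ by conditioning at the root, Poissonize, Mellin-transform, extract asymptotics from the dominant singularity at $s=-1$, and finally depoissonize. Conditioning on the first symbols of the $n$ input strings, the root-subtree sizes $(N_1,\ldots,N_m)$ are multinomial$(n;p_1,\ldots,p_m)$, each subtree is an independent trie from the same source, and the whole trie itself has shape $T$ only when $n=\tau$---an event of probability $\Q(T)$ by Remark~\ref{Remark:shapefunctional}. Writing $X_{n,T}=\indicator\{\text{whole trie has shape }T\}+\sum_{j=1}^m X_{N_j,T}^{(j)}$ and taking expectations gives
$$
\mu_n=\Q(T)\,\indicator\{n=\tau\}+\sum_{j=1}^m\sum_{k=0}^n\binom{n}{k}p_j^{k}(1-p_j)^{n-k}\mu_k.
$$

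I then Poissonize. With $\widetilde\mu(z):=e^{-z}\sum_{n\ge0}\mu_n z^n/n!$, the recurrence becomes the additive functional equation
$$
\widetilde\mu(z)=\frac{\Q(T)}{\tau!}\,e^{-z}z^{\tau}+\sum_{j=1}^m\widetilde\mu(p_jz),
$$
whose Mellin transform $\mu^{\ast}(s):=\int_0^{\infty}\widetilde\mu(z)z^{s-1}dz$, using the scaling identity $\int_0^{\infty}\widetilde\mu(p_jz)z^{s-1}dz=p_j^{-s}\mu^{\ast}(s)$, takes the closed form
$$
\mu^{\ast}(s)=\frac{\Q(T)}{\tau!}\cdot\frac{\Gamma(s+\tau)}{1-\sum_{j=1}^m p_j^{-s}},
$$
holomorphic in the fundamental strip $-\tau<\Re s<-1$.

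The next step is Mellin inversion and singularity analysis. Because $\sum_j p_j=1$, the denominator has a simple zero at $s=-1$ with derivative $\sum_j p_j\ln p_j=-h$. Shifting the inversion contour to the right past this line, the residue at $s=-1$ supplies the main term
$$
\frac{\Q(T)\,\Gamma(\tau-1)}{\tau!\,h}\,z=\frac{\Q(T)}{\tau(\tau-1)h}\,z.
$$
Any further zeros of $1-\sum_j p_j^{-s}$ on the line $\Re s=-1$---which appear precisely when the $\ln p_j$ are rationally related---are also simple and contribute a $\ln z$-periodic fluctuation of zero mean, to be packaged as $\widetilde\xi_T(z)\cdot z$; the remaining poles lie strictly to the right of this line and contribute $o(z)$. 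Therefore $\widetilde\mu(z)=\frac{\Q(T)}{\tau(\tau-1)h}\,z+\widetilde\xi_T(z)\,z+o(z)$.

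Finally I depoissonize via the Jacquet--Szpankowski theorem: once the cone-type growth bound $|\widetilde\mu(z)|=O(|z|)$ for $|\arg z|<\theta$ is established---a routine estimate from the functional equation---one transfers back to the fixed-$n$ model as $\mu_n=\widetilde\mu(n)+o(n)$, giving the claimed asymptotic with $\xi_T(n):=\widetilde\xi_T(n)$. The main obstacle is this last phase: establishing the cone bound with sufficient uniformity, absorbing the nondominant Mellin poles into the $o(n)$ remainder, and confirming that the periodic fluctuation---a Fourier series in $\ln n$ with no constant term---indeed averages to zero. Everything preceding is a routine application of the standard trie Mellin toolkit.
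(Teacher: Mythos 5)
Your proof is correct, and it reaches the same poissonized object by a genuinely different route. The paper decomposes $X_{n,T}$ as a sum of indicators $I_{n,T,w}$ over all words $w\in\alphabet^{*}$ marking where on the fringe $T$ could be rooted, computes $\E[I_{n,T,w}]$ in closed form, poissonizes to write $\E[X_{N_z,T}] = \frac{\Q(T)}{\tau!}\,B_1(z)$ with $B_1(z)=\sum_{w}\Prob^{\tau}(w)z^{\tau}e^{-\Prob(w)z}$, and then simply cites the asymptotics of $B_1$ from Mahmoud and Ward's cousins paper before depoissonizing. You instead set up a root-conditioning recurrence for $\mu_n$, poissonize to obtain the functional equation $\widetilde\mu(z)=\frac{\Q(T)}{\tau!}z^{\tau}e^{-z}+\sum_{j}\widetilde\mu(p_jz)$, and solve it via Mellin. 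The two paths converge: since $\widetilde\mu(z)=\frac{\Q(T)}{\tau!}B_1(z)$ and $B_1^{*}(s)=\Gamma(s+\tau)\sum_w\Prob^{-s}(w)=\Gamma(s+\tau)\bigl(1-\sum_j p_j^{-s}\bigr)^{-1}$, your $\mu^{*}(s)$ is exactly the paper's harmonic-sum transform, and the residue extraction at $s=-1$ is identical. What the paper's indicator route buys is direct reuse of a known harmonic sum (shorter) and, more importantly, a representation that extends cleanly to the variance and covariance computations in Section 6.2, where the independence of $I_{N_z,T,w}$ and $I_{N_z,T,v}$ for incomparable $w,v$ is crucial; your recurrence route is more self-contained and mirrors what the paper itself does later for the moment generating function in Lemma~\ref{Lm:poissonizedmgf}. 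One implicit point worth making explicit in your recurrence $X_{n,T}=\indicator\{\text{trie}=T\}+\sum_j X_{N_j,T}^{(j)}$: you should note that $T$ cannot occur as a proper fringe subtree of itself (a finite trie of size $\tau$ would otherwise have to contain an infinite descending chain of size-$\tau$ subtries), so the indicator and the subtree sum never both count the same occurrence; and, as you implicitly use, $\mu_0=\mu_1=0$ so the boundary terms vanish in the Poisson sum.
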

 We note that $\xi_T$ usually has 
small magnitude in many specific cases, when the probabilities $(p_{1},\dots,p_{m})$  
are periodic\footnote{A set of probabilities $p_1, \ldots, p_m$ is said to be periodic,
when $\log p_j / \log p_k$ is rational, 
for every $1 \le j, k\le m$.} (as an example, the magnitude can be of the order
$10^{-5}$ for some specific values of the $p_i$'s), and is $0$, otherwise.
(We do not claim, however, that any uniform small
  bound exists, which covers all $(p_{1},\ldots,p_{m})$.)

%\textbf{The Remark below was written under the hypothesis that the expression for $\E[X_{n,T}]$ contained the term $(1-\sum_{j=1}^{m}p_{j}^{\tau})$. I think this term was a typo, and have removed it. This omission of this term affects the aptness of the following remark, which should probably be tweaked somewhat.}  
\begin{remark}\label{Rem:interpretsum}
The average in Proposition~\ref {Prop:mean} is the 
same as the average number of $\tau$--cousins in~\cite{Ward}, except for the factor
$\Q(T)$. This is, of course, to be expected, as $\tau$--cousins can come in various shapes (all being
tries of  size $\tau$), and the expected number of occurrences of a given shape
is the same as the average number of cousins, ramified by the shape functional, 
which is the probability of picking
the shape in question.
\end{remark}

\bigskip 
\begin{theorem}
\label{Theo:covar}
Let $X_{n,T}$ be the number of occurrences 
of a fixed motif\thinspace~$T$ of size~$\tau$
in an $m$-ary trie constructed
over $n$ independent strings from an
$m$-ary alphabet $\{a_1, \ldots, a_m\}$, with probabilities $p_j > 0$, 
for $j=1, \ldots, m$. Then, we have
\begin{align*}
\V[X_{n, T}] &= \biggl[{\Q(T) \over \tau (\tau-1)h}\, 
- {2 \Q^2(T) \over  h}\bigg(\frac{2^{-2\tau}} {2\tau(2\tau-1) } {2\tau
  \choose \tau} +\frac1  {(\tau!)^2}\sum_{j = 0}^\infty (-1)^j
{\sum_{k=1}^m p_k^{j+\tau} \over 1 - \sum_{k=1}^m p_k^{j+\tau}} \times
{(j+ 2\tau-2)! \over j!}\bigg)\\
&\qquad{}+ \delta_{T}(n) - \Big({\Q(T) \over \tau(\tau-1)h} + \widehat \delta_T(n)\Big)^2\biggr]\, n +o(n).
\end{align*}
where $\Q(T)$ is the shape functional of $T$, and
$\delta_T(.)$ and  $\widehat \delta_T(.)$ are possibly fluctuating 
 with average value zero,
% which have small magnitude in many cases, 
when the probability set is aperiodic, and is $0$
otherwise.\footnote{In the aperiodic case, the $o(n)$ estimate 
can be improved to $O(n^{1-\varepsilon})$, 
for some $0< \varepsilon < 1$.}

Furthermore,
if $T$ and $\widetilde T$ are two nonoverlapping 
shapes of sizes $\tau$ and $\widetilde \tau$ (where $\tau$ and
$\widetilde{\tau}$ are not necessarily the same), we have the covariance
\begin{align*}
\Cov[X_{n,T} , X_{n,\widetilde T}]
&=\bigg[-{2 \Q(T)\, \Q(\widetilde T) \over \tau!\, \widetilde \tau!\,
  h}  \bigg(2^{-\tau -\widetilde\tau}(\tau+\widetilde \tau-2)!\\
&\qquad\qquad\qquad {}+2^{-1}\sum_{j = 0}^\infty (-1)^j\bigg( {\sum_{k=1}^m p_k^{\tau+j} \over 1 - \sum_{k=1}^m p_k^{\tau+j}}+ {\sum_{k=1}^m p_k^{\widetilde\tau+j} \over 1 - \sum_{k=1}^m p_k^{\widetilde\tau+j}}\bigg) \times {(\tau+ \widetilde \tau+j-2)! \over j!}\bigg) \cr
& \qquad\qquad  {} + \frac 1 2\bigl( \delta_{T, \widetilde T}(n) - \delta_T (n) - \delta_{\widetilde T} (n)\bigr)   - {\Q(T)\, \Q(\widetilde T) \over  \tau(\tau-1)\widetilde\tau(\widetilde\tau-1)h^{2}}  \cr
& \qquad \qquad {}+ {1 \over 2h}\bigg({\Q(T) \over \tau(\tau-1)}\bigl(\widehat \delta_{T}(n)-\widehat\delta_{T,\widetilde T}(n)\bigr) + {\Q(\widetilde T) \over \widetilde \tau(\widetilde \tau-1)} \bigl(\widehat\delta_{\widetilde T}(n)-\widehat\delta_{T,\widetilde T}(n)\bigr)\bigg)\cr
&\qquad\qquad {} -(\widehat\delta_{T,\widetilde T}(n)^{2}- \widehat\delta_{T}(n)^{2}-\widehat\delta_{\widetilde T}(n)^{2})
\bigg]\, n+o(n),
\end{align*}
where $\delta_{T}(.), \widehat\delta_{T}(.)$, and $\delta_{T,\widetilde T}(.),
\widehat\delta_{T,\widetilde T}(.)$ are %small 
oscillating functions (possibly 0),
and the first two are the same as those that appear in the variance.
\end{theorem}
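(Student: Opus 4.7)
The plan is to use poissonization together with the Mellin transform, exactly along the lines already announced in the list of subsections. Introduce a poissonized trie built from a $\mathrm{Poisson}(z)$ number of independent strings, and let $\widetilde X_{z,T}$ denote the poissonized count of fringe occurrences of $T$. Conditional on the sizes of the $m$ first-level subtries, these subtries are independent poissonized tries with intensities $p_1 z,\dots,p_m z$. Hence $\widetilde X_{z,T}$ satisfies the additive recursion $\widetilde X_{z,T} = \mathbf 1\{\text{whole trie matches }T\} + \sum_{k=1}^m \widetilde X^{(k)}_{p_k z, T}$, where the boundary indicator contributes only when the trie has size~$\tau$ and arranges its strings into shape~$T$, with probability $e^{-z} z^\tau \Q(T)/\tau!$. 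Taking first and then second moments produces functional equations of the form $f(z) = e^{-z} g(z) + \sum_{k=1}^m f(p_k z)$, where the ``toll'' $g$ encodes boundary contributions that, for the second moment, involve pairwise shape-coincidence probabilities of two independent poissonized tries.

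The Mellin transform of such a splitting equation is rational in the denominator $1 - \sum_{k=1}^m p_k^{-s}$, multiplied by $\Gamma(s)$ and the Mellin transform of the toll. I would verify the existence of the fundamental strip (this is precisely the job of Subsection~\ref{Fundamentalstripsection}) and then push the inverse-Mellin contour past the pole at $s=-1$, which contributes the leading $cn$ term; additional poles on $\Re(s)=-1$ appear only in the periodic case and produce the oscillating functions $\delta_T(n), \widehat\delta_T(n)$, which average to zero. The two explicit terms $2^{-2\tau}\binom{2\tau}{\tau}/(2\tau(2\tau-1))$ and $\sum_{j\ge 0}(-1)^j \frac{\sum_k p_k^{j+\tau}}{1-\sum_k p_k^{j+\tau}}\cdot \frac{(j+2\tau-2)!}{j!}$ come from evaluating the Mellin transform of the coincidence toll at $s=-1$: the first is the residue arising from the Beta-function integral that counts the probability that two independent poissonized tries of the same shape $T$ appear side by side, and the second is the series expansion obtained by expanding $(1-\sum_k p_k^{-s})^{-1}$ in geometric form and integrating termwise. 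Finally, I would invoke the Jacquet--Szpankowski depoissonization lemma, justified by the analyticity of $\widetilde X_{z,T}$ and polynomial bounds away from the positive real axis, to transfer the Poisson-model asymptotics back to the $n$-string model.

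For the covariance I would exploit the polarization identity
\begin{equation*}
\Cov[X_{n,T}, X_{n,\widetilde T}] = \tfrac{1}{2}\bigl(\V[X_{n,T} + X_{n,\widetilde T}] - \V[X_{n,T}] - \V[X_{n,\widetilde T}]\bigr),
\end{equation*}
which is precisely why the statement displays the combinations $\delta_{T,\widetilde T}(n) - \delta_T(n) - \delta_{\widetilde T}(n)$ and $\widehat\delta_{T,\widetilde T}(n) - \widehat\delta_T(n) - \widehat\delta_{\widetilde T}(n)$. Because $T$ and $\widetilde T$ are nonoverlapping, the sum $X_{n,T} + X_{n,\widetilde T}$ counts fringe occurrences of the two-element collection $\{T,\widetilde T\}$, and its variance is accessible via the same machinery applied to the enlarged toll. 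The cross terms in the Mellin computation produce the mixed series $\sum_{j\ge 0}(-1)^j\bigl(\tfrac{\sum_k p_k^{\tau+j}}{1-\sum_k p_k^{\tau+j}} + \tfrac{\sum_k p_k^{\widetilde\tau+j}}{1-\sum_k p_k^{\widetilde\tau+j}}\bigr)\tfrac{(\tau+\widetilde\tau+j-2)!}{j!}$, the symmetric factor $2^{-\tau-\widetilde\tau}(\tau+\widetilde\tau-2)!$ comes from the analogous Beta integral for a pair of distinct shapes of sizes $\tau$ and $\widetilde\tau$ sitting at the same fringe level, and the subtracted product $\Q(T)\Q(\widetilde T)/(\tau(\tau-1)\widetilde\tau(\widetilde\tau-1)h^2)$ is just $\E[X_{n,T}]\E[X_{n,\widetilde T}]/n$ coming from Proposition~\ref{Prop:mean}.

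The main technical obstacle is the Mellin analysis of the second-moment equation: one must identify the correct fundamental strip when the toll function is itself a sum over all possible sizes of the bracketing subtries, and then justify the termwise residue expansion that produces the explicit series with factors $(j+\tau+\widetilde\tau-2)!/j!$. A secondary difficulty is controlling the depoissonization error uniformly so that the oscillating remainders $\delta$ and $\widehat\delta$ survive intact through the transfer from $z$ to $n$; in the aperiodic case the stronger $O(n^{1-\varepsilon})$ remainder advertised in the footnote requires a quantitative bound on the distance from $\Re(s)=-1$ to the next pole of $(1-\sum_k p_k^{-s})^{-1}$, which follows from a standard Diophantine argument on the ratios $\log p_j/\log p_k$.
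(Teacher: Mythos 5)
Your proposal uses the same broad machinery as the paper (poissonization, Mellin inversion, depoissonization, polarization for the covariance), but the decomposition you propose for the variance is genuinely different. The paper does \emph{not} derive a functional equation for the poissonized second moment; instead it writes $X_{n,T}=\sum_{w\in\alphabet^*}I_{n,T,w}$ and attacks $\V[Y_{N_z,\collection}]=\sum_{\kappa,\nu}\alpha_\kappa\alpha_\nu\sum_{w,v}\Cov[I_{N_z,T_\kappa,w},I_{N_z,T_\nu,v}]$ directly via a four-case analysis (neither $w$ nor $v$ a prefix of the other; $w=v$ with $\kappa=\nu$; $w=v$ with $\kappa\neq\nu$; one a proper prefix of the other), exploiting the independence of disjoint subtrees in the Poisson model. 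This collapses the double sum into three manageable pieces $v_1,v_2,v_3$, each with a clean Mellin transform. Your root-level splitting recursion $v(z)=\sum_k v(p_kz)+g(z)$ is a legitimate alternative (closer to the Fuchs--Hwang--Zacharovas variance methodology that the paper itself cites), and after unwinding the recursion and transforming $g$ one lands on the same residue at $s=-1$; but your toll $g$ already involves a harmonic sum through $\E[\widetilde X_{p_kz,T}]$, so the hard Mellin work is merely relocated, not avoided. Also a small slip: the Mellin transform of the splitting equation is $g^*(s)/(1-\sum_k p_k^{-s})$ with no extra freestanding factor of $\Gamma(s)$; the Gamma factors live inside $g^*$.

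The genuine gap is the depoissonization step. You invoke ``the Jacquet--Szpankowski depoissonization lemma'' to transfer Poisson asymptotics to the fixed-$n$ model, but the naive substitution $\V[X_{n,T}]\approx\V[X_{N_z,T}]\big|_{z=n}$ is \emph{wrong} at the $\Theta(n)$ level, which is exactly the level at which the theorem's constant lives. The paper is explicit that one must use the sharp variance-depoissonization identity
\begin{equation*}
\V[Y_{n,\collection}]=\Big(\V[Y_{N_z,\collection}]-z\Big[\tfrac{d}{dz}\E[Y_{N_z,\collection}]\Big]^2\Big)\Big|_{z=n}+O(n^{1-\epsilon}),
\end{equation*}
and the correction term $-z(\frac{d}{dz}\E[Y_{N_z,\collection}])^2$ is precisely the source of the $-\bigl(\Q(T)/(\tau(\tau-1)h)+\widehat\delta_T(n)\bigr)^2 n$ block in the variance and, after polarization, of the $-\Q(T)\Q(\widetilde T)/(\tau(\tau-1)\widetilde\tau(\widetilde\tau-1)h^2)$ and $\widehat\delta$-cross terms in the covariance. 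Your proposal offers no mechanism that produces these terms; attributing the subtracted product to ``$\E[X_{n,T}]\E[X_{n,\widetilde T}]/n$ coming from Proposition~\ref{Prop:mean}'' matches the numerical value but misidentifies its origin---the poissonized variance $v(z)$ already subtracts the poissonized squared mean, so nothing in $v(z)$ itself supplies this additional subtraction. Without the sharp depoissonization correction your argument would overstate the linear coefficient of the variance.
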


%\bigskip
%\begin{remark}
%The covariance derived in Theorem~\ref{Theo:covar} between two motifs
%does not assume their orders are the same. This simplifying assumption 
%is only made in the theorems for limit distributions. 
%\end{remark}

Another main result of this paper is the following theorem and its corollary.
These results use a terminology from multivariate statistics. The notation
$\normal_k(\bfzero ,\bfSigma)$ stands for
the multivariate jointly normally distributed
random vector with mean vector~$\bfzero$ (of $k$ components) 
and $k \times k$ covariance 
matrix $\bfSigma$. When $k=1$, we shall write the univariate
normal variate in the usual form as $\normal(0, \sigma^2)$,
where the 0 and $\sigma^2$ are both scalars.

\bigskip
\begin{theorem}
\label{Theo:main}
%%Let $\indexingset$ be a subset of $\mathbb{N}$.
Let $\collection = \{T_\nu \given \nu \in \indexingset\}$ be a collection of 
nonoverlapping tries, all of size $\tau > 1$,
where $\indexingset$ is finite or countably infinite.
Let $X_{n,T}$ be the number of
occurrences of a shape~$T$ of  size $\tau$
 in an $m$-ary trie constructed
over $n$ independent strings from an
$m$-ary alphabet $\{a_1, \ldots, a_m\}$, with probabilities 
$p_j>0$, for $j = 1, \ldots, m$.
For real numbers $\alpha_\nu$, 
let $\sum_{\nu\in \indexingset} \alpha_\nu X_{n,T_\nu}$ be any arbitrary nontrivial linear combination of these counts (not all $\alpha$'s are 0).
We then have 
$$\frac {\displaystyle \sum_{\nu\in \indexingset}  \alpha_\nu X_{n,T_\nu} - \mu_\collection(n) \, n} {\sigma_\collection(n) \sqrt n} \ \convLaw \ 
\normal(0 ,1),$$
where $\mu_\collection(n)$ and $\sigma_{\collection}^{2}(n)$ are the coefficients of $n$ in the asymptotic expansions for $\E[\sum_{\nu\in \indexingset} \alpha_\nu X_{n,T_\nu}]$ and $\V[\sum_{\nu\in \indexingset} \alpha_\nu X_{n,T_\nu}]$ implicitly given, respectively, by Proposition \ref{Prop:mean} and Theorem \ref{Theo:covar}.\footnote{In our case, the variance
$\sigma^{2}_\collection(n)$ will always be strictly positive. For a more in-depth
  consideration of the variance for shape parameters in random tries,
  see Schachinger~\cite{Schachinger1995}.}
\end{theorem}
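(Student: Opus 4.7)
The plan is to prove this via analysis of the moment generating function of the scalar variable $Y_n := \sum_{\nu \in \indexingset} \alpha_\nu X_{n,T_\nu}$, showing that its centered and normalized version has a Gaussian MGF in the limit. The first- and second-moment asymptotics giving $\mu_\collection(n)$ and $\sigma_\collection(n)^2$ follow from Proposition~\ref{Prop:mean} and Theorem~\ref{Theo:covar} applied termwise to the linear combination; what remains is to prove that the higher cumulants of the centered, rescaled $Y_n$ vanish asymptotically.

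The starting point is the recursive decomposition of $Y_n$ on a random trie. Because all motifs have the common size $\tau$ and are pairwise nonoverlapping, an occurrence at the root requires $n = \tau$, whereas for $n > \tau$ all occurrences lie strictly inside the $m$ root subtries. Conditioning on the multinomial sizes $(N_1,\dots,N_m)$ of the children of the root yields
$$Y_n \;=\; \indicator[n=\tau]\sum_\nu \alpha_\nu \indicator[\text{the trie has shape }T_\nu] \;+\; \indicator[n>\tau]\sum_{k=1}^m Y^{(k)}_{N_k},$$
with $Y^{(k)}_{N_k}$ independent copies of $Y_{N_k}$ in the $k$th subtrie. Poissonizing the sample size by letting $\Nz$ be Poisson$(z)$ distributed turns the $N_k$ into independent Poisson$(p_k z)$ variables, so the MGF $\widetilde\Phi(z,t) := \E[\exp(t Y_{\Nz})]$ satisfies the functional equation
$$\widetilde\Phi(z,t) \;=\; \prod_{k=1}^m \widetilde\Phi(p_k z, t) \;+\; R(z,t),$$
where $R(z,t)$ is a boundary term built from the distributions of $Y_n$ for $n \le \tau$. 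This extends to the full MGF the same algebraic structure already exploited in Subsections~\ref{Subsec:mean}--\ref{Subsec:var} for the mean and variance.

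Taking $K(z,t) := \log \widetilde\Phi(z,t)$ linearizes the iteration up to a small perturbation, and applying the Mellin transform in $z$ gives a representation $K^{*}(s,t) = \widehat R^{*}(s,t)/(1 - \sum_k p_k^s)$ in which the nonlinear correction is absorbed into $\widehat R$. The dominant pole at $s = -1$, together with the singularity analysis carried out in Subsections~\ref{Mellintransformsection}--\ref{AsymptoticsofVsection}, yields an expansion
$$K(z,t) \;=\; z\,\Psi(t) \;+\; O(z^{1-\varepsilon}),$$
with an oscillating remainder in the periodic case, where $\Psi$ is entire in $t$ and admits the Taylor expansion $\Psi(t) = \mu_\collection t + \tfrac{1}{2}\sigma_\collection^2 t^2 + O(t^3)$. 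Substituting $t \mapsto t/(\sigma_\collection\sqrt{z})$ and subtracting the mean gives $\widetilde\Phi\bigl(z, t/(\sigma_\collection \sqrt z)\bigr)\,e^{-\mu_\collection t \sqrt z/\sigma_\collection} \to e^{t^2/2}$, i.e.\ Gaussian MGF convergence under the Poisson model. Jacquet--Szpankowski depoissonization (Section~\ref{Sec:poissonization}) then transfers this limit from the $\Nz$ model back to the fixed-$n$ model.

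The principal obstacle is handling countably infinite $\indexingset$; for finite collections the recurrence, Mellin analysis, and moment convergence are routine. In the infinite case one must first verify that $Y_n = \sum_\nu \alpha_\nu X_{n,T_\nu}$ converges almost surely and has finite moments of every order, which I would do by dominating $\sum_\nu |\alpha_\nu|\, X_{n,T_\nu}$ in terms of the overall $\tau$-cousin count (finite for any realized trie) and invoking the measure-theoretic setup of Section~\ref{Sec:ProbabilitySpace}. Second, the infinite sum must be exchanged with the Mellin inversion; the argument uses truncations $Y_n^{(N)} := \sum_{\nu \le N} \alpha_\nu X_{n,T_\nu}$, uniform bounds on the corresponding MGFs $\widetilde\Phi^{(N)}(z,t)$ on compact subsets of the relevant cones in the complex plane, and dominated convergence to pass $N \to \infty$ inside the asymptotic expansion. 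Finally, strict positivity of $\sigma_\collection^2$ for every nontrivial choice of $(\alpha_\nu)$---needed so that the normalization is nondegenerate---is extracted from the variance lower bound of Schachinger cited in the footnote.
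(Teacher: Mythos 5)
The route you propose is genuinely different from the paper's, and it has a real gap at the most delicate step. The paper does start from the same functional equation for $\Phitc(u,z)$ (its Lemma~\ref{Lm:poissonizedmgf}), but it then invokes a specific depoissonization--CLT lemma of Jacquet--Szpankowski (Lemma~\ref{Lm:distdepoi}) whose three conditions must be verified. Conditions 1 and 2 are quick; the hard part is Condition 3, a bound of the form $|e^{z}\Phitc(u/\sqrt{V_n},z)| \le \exp(n - An^{1/2+\epsilon})$ for $|z| = n$ \emph{outside} the cone $\mathcal{C}(\epsilon)$. The paper proves this by rewriting the recurrence multiplicatively, taking logs, and running an induction over nested annuli $D_k = \{\xi \le |z| \le \xi\lambda^k\}$, with the extraneous inductive term finally controlled by a Mellin estimate on $\sum_{w}(\Prob(w)z)^\tau e^{-\Prob(w)|z|^{1-\alpha}}$. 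Your proposal, by contrast, Mellin-transforms $K(z,t) = \log\widetilde\Phi(z,t)$, extracts $K(z,t) = z\Psi(t) + o(z)$, concludes Gaussian MGF convergence in the Poisson model, and then waves at ``Jacquet--Szpankowski depoissonization'' to finish. That last step is exactly where the work is: passing a distributional limit from the Poisson model back to fixed $n$ is \emph{not} automatic, and requires precisely the kind of exponentially small bound off the cone on the circle $|z|=n$ that the paper establishes by its nested-domain induction. Without that bound your proof is incomplete; the expansion $K(z,t) = z\Psi(t) + o(z)$ along the real axis (or even inside a cone) does not by itself control $\Phic$ where $e^z$ is not dominant.

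Two smaller points. First, your Mellin argument silently treats the log-recurrence as linear; in fact $K(z,t) = \sum_k K(p_kz,t) + \log\bigl(1 + R/\prod_k\widetilde\Phi(t,p_kz)\bigr)$, and the correction term's decay, analyticity in $t$, and uniformity all need to be established before the transform $K^*(s,t)=\widehat R^*(s,t)/(1-\sum_k p_k^{-s})$ is legitimate and before one may claim $\Psi$ is entire with the stated Taylor coefficients. Second, the effort you spend on countably infinite $\indexingset$ (truncation, dominated convergence, exchange with Mellin inversion) is unnecessary under the paper's framing: by the convention in footnote~\ref{ourfootnote7}, an infinite-dimensional vector is declared multivariate normal when every \emph{finite} nontrivial linear combination of its components is univariately normal, so the paper simply reduces to finite $\indexingset$ at the end of Section~\ref{Sec:Results} with no loss of generality. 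You could adopt the same reduction and skip that machinery entirely.
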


\bigskip
In numerous cases, the normality of the univariate linear combination
gives us a multivariate central limit theorem.
Let $\vecX_{n, \collection}$ be the vector
with components $X_{n, T_\nu}$, for $\nu \in \indexingset$. 
A corollary of Theorem~\ref{Theo:main} is that in the aperiodic case we have 
$$\frac {{\vecX_{n, \collection}} - \bfmu_\collection(n) \, n} {\sqrt n} \ \convLaw \ 
\normal_{|\indexingset|}(\bfzero ,\bfSigma_\collection),$$
where $\bfmu_\collection$ is the vector with nonoscillating 
components, that are
the linearity coefficients of the individual means, and
$\normal_{|\indexingset|}(\bfzero ,\bfSigma_\collection)$ 
is the multivariate jointly normally distributed
random vector with mean vector~$\bfzero$ (of $|\indexingset|$ components) 
%and $|\indexingset| \times |\indexingset|$ covariance 
%matrix $\bfSigma_\collection$,
and the entries 
of $\bfSigma_\collection$ are the nonoscillating linearity coefficients in the variances 
and covariances.\footnote{\label{ourfootnote7}We take an infinite-dimensional 
random vector to have a multivariate normal distribution, when 
every nonzero finite linear combination
of its components has a univariate normal distribution.} 

% MDW REMOVED THIS AMBITIOUS CLAIM
%Toward a multivariate central limit theorem, we work with
%a univariate linear combination, and prove a univariate 
%central limit theorem for it, to ultimately use the Cram\'er-Wold
%device. For example, if our collection of motifs $\collection$ is finite (say with the finite 
%indexing set $\{1, \ldots, c\}$, for a positive integer $c$), we can work with the linear combination
%$$ Y_{n, \collection} = \alpha_1 X_{n, T_1} + \alpha_2 X_{n, T_2}+ \cdots + \alpha_c X_{n, T_c},$$
%for any set of nonnegative real values $\alpha_j$, for $j \in\{1,2, \ldots, c\}$ (not all zero). It will turn out
%that $Y_{n, \collection}$ is asymptotically normally distributed, and
%consequently in the aperiodic case, the vector 
%$(X_{n, T_1}, X_{n, T_2}, \ldots , X_{n, T_c})$ 
%asymptotically has a joint multivariate normal distribution.

%We can now take up the definition of a linear combination. Let $\collection$
%be a collection
%of tries indexed by $\indexingset \subseteq \mathbb{N}$. 
Let $\{\alpha_\nu \given \nu \in \indexingset\}$ be an arbitrary
collection of real numbers (not all zero). Let
$$Y_{n,\collection} = \sum_{\nu \in \indexingset} \alpha_\nu X_{n, T_\nu};$$
it is our aim to show that, when appropriately 
centered and normalized, $Y_{n,\collection}$ 
converges in distribution to a standard normal random variate
(in the aperiodic case). According to the definition of a multivariate
distribution of an infinite dimensional vector, as given in footnote~\ref{ourfootnote7}, it suffices to consider only 
(arbitrary) finite linear combinations. So, with no loss of generality,
we consider $\indexingset$ finite. The reader will be alerted at
a few places in the sequel, when we switch back to considering an
infinitely countable indexing set. 
\section{A probability space underlying tries}
\label{Sec:ProbabilitySpace}
Our motivation is that any $n$ distinct, infinite-length strings $S_1,
\ldots, S_n$ uniquely define a trie $T_n$.  Each of the $n$ external nodes
corresponds to one of the strings (say $S_i$) as follows:  
%%%The string that describes 
The path from the root to the external node corresponds exactly to the
shortest prefix $W_i$ of $S_i$ that is not a prefix of any other $S_j$.
Since we deal with strings of infinite length, however, the potential
overlaps among strings can be arbitrarily long.  Therefore, to
rigorously establish our probability model,
%%%To be able to deal with countably infinite collections, 
we use a measure-theoretic setup. This has traditionally been
accomplished with an 
approach relying on cylinders; see~\cite{Pittel}. Our methodology of
setting up this probability space is different (our hope is to make the
process more transparent to the reader).

Let $\alphabet_\infty = \prod_{n=1}^{\infty}\alphabet = \alphabet \times \alphabet \times \alphabet \times
\ldots$ denote the set of all infinite-length strings.
We define $\Omega = \prod_{n=1}^\infty A_\infty$.
Each $\omega \in \Omega$ is an infinite-length (ordered) tuple
of infinite-length strings, i.e., 
$\omega = (S_{1},S_{2},S_{3},\ldots)$, where $S_{i} \in A_{\infty}$, 
i.e., each coordinate of $\omega$ is an
infinite-length string of characters from $\alphabet$.

% We take $\field$ to be the smallest $\sigma$-field containing $\Omega$.

The trie $T(W_{1},\ldots,W_{n})$ induced by a collection
$(W_{1},\ldots,W_{n})$ is defined as 
$$T(W_{1},\ldots,W_{n}) = \{\omega = (S_{1},S_{2},\ldots) \in
\Omega\ |\ \hbox{$W_{j}$ is a prefix of $S_{j}$ for $1\leq j\leq n$}\}.$$
We define the collection of all tries of size $n$ as:
$$\mathcal{T}_{n} = \{T(W_{1},\ldots,W_{n})\ |\ 
\hbox{$(W_{1},\ldots,W_{n})$ has the trie property}\},$$
and then the collection of all tries is $$\mathcal{T} :=
\bigcup_{n=1}^\infty\mathcal{T}_{n}.$$
We say that two tries are disjoint if neither is a subtree of the other.
We use the notation $T \subseteq T^{\prime}$ to indicate that $T$ is a subtree of $T^{\prime}$.

Several remarks help us prepare the setup of the measure space and the
probability measure on this space.
\begin{remark}
There are a countable number of tries.
\end{remark}
\begin{proof}
Since the collection $\alphabet^{*}$ of all finite-length strings
is countable, it follows that there are a countable number of
tuples $(W_{1},\ldots,W_{n})$ satisfying the \emph{trie property}, so
$\mathcal{T}_{n}$ is countable.  Thus, $\mathcal{T}$ is countable too.
\end{proof}
\begin{remark}\label{disjointtrieremark}
For each fixed $n$, the tries in $\mathcal{T}_{n}$ are disjoint.
\end{remark}
\begin{remark}\label{subsettrieremark}
For fixed $m$ and $n$ with $m<n$, if $T \in \mathcal{T}_{m}$ and
$T^{\prime} \in \mathcal{T}_{n}$, then either $T$ and $T^{\prime}$ are
disjoint, or $T^{\prime}\subseteq T$.  Moreover, if
$T^{\prime}\subseteq T$, then $\mbox{\rm height}(T) \le \mbox{\rm height}(T^{\prime})$, where ${\rm height}(T(W_{1},\ldots,W_{n}))$ 
is the length of the longest word among the $W$'s.
\end{remark}
\begin{remark}\label{smallerdisjointcollection}
If $\mathcal{K}$ is a collection of tries, we can use 
Remarks~\ref{disjointtrieremark} and~\ref{subsettrieremark} to replace
$\mathcal{K}$ with another collection of tries $\mathcal{L} \subseteq
\mathcal{K}$ such that $\bigcup_{T\in\mathcal{L}}T =
\bigcup_{T\in\mathcal{K}}T$, and such that the tries in $\mathcal{L}$ 
are disjoint. In fact, $\mathcal{L}$ can be built constructively
from $\mathcal{K}$: 
Organize the tries from $\mathcal{K}$ according to increasing heights.
Only put a trie from $\mathcal{K}$ into 
$\mathcal{L}$, if it is disjoint from all tries of the same-or-lesser
height, as compared to the other tries in $\mathcal{L}$.
(We organize tries by height instead
of numbers of leaves, since there are only a finite number of tries
of each height, but there are an infinite number of tries for each
fixed number $n$ of leaves, with $n\geq 2$.)
\end{remark}
Now denote the set of all countable unions of tries as
$$\mathcal{F} = \Big\{
\bigcup_{T\in\mathcal{K}}T\ |\ \mathcal{K}\subseteq\mathcal{T} \Big\}.$$
\begin{remark}
The collection $\mathcal{F}$ is a $\sigma$-field.
\end{remark}
\begin{proof}  We show (1) $\Omega\in\mathcal{F}$, 
(2) $\mathcal{F}$ is closed under countable unions, and
(3) $\mathcal{F}$ is closed under complementation.

Using $n=1$ and $W_{1}=\varepsilon$ (the trivial string of length 0),
we see $T(W_{1})=\Omega$, so $\Omega \in \mathcal{F}$.
Since each element of $\mathcal{F}$ is a countable union of tries,
$\mathcal{F}$ is closed under countable unions.  
Finally, we show that
$\mathcal{F}$ is closed under complements too.  Consider an element of
$\mathcal{F}$, which necessarily has the form
$\bigcup_{T\in\mathcal{K}}T$ for some collection of tries
$\mathcal{K}\subseteq\mathcal{T}$.  Now define a new collection of
tries, denoted by $\mathcal{K}^{\prime}\subseteq\mathcal{T}$ as follows:  For
each $T^{\prime}\in \mathcal{T}$, let
$T^{\prime}\in\mathcal{K}^{\prime}$ if and only if $T^{\prime}$ is
disjoint from all $T \in \mathcal{K}$.  Then $\bigcup_{T\in\mathcal{K}}T$ and
$\bigcup_{T^{\prime}\in\mathcal{K}^{\prime}}T^{\prime}$ form a
partition of $\Omega$, i.e., they are disjoint, and their union is
exactly $\Omega$.  Therefore,
$\bigcup_{T^{\prime}\in\mathcal{K}^{\prime}}T^{\prime}$ is a countable
union of tries that is exactly the complement of
$\bigcup_{T\in\mathcal{K}}T$.  So, $\mathcal{F}$ is closed under complementation.
(Note: We do not claim $\mathcal{K}
\cup \mathcal{K}^{\prime} = \mathcal{T}$.  
There are generally tries which are neither in $\mathcal{K}$ nor in $\mathcal{K}^{\prime}$.)
\end{proof}
Finally, we define the probability measure on $\mathcal{F}$.  For each $T \in
\mathcal{T}$, we write $T = T(W_{1},\ldots,W_{n})$ for some~$n$ and
some finite-length strings $W_{j}$.  If $W_{j} = a_{i_{1}}a_{i_{2}}\ldots
a_{i_{|W_{j}|}}$, we define $\Prob(W_{j}) = \prod_{k=1}^{|W_{j}|}p_{i_{k}}$.  Then we define
$\Prob(T) = \prod_{j=1}^{n}\Prob(W_{j})$.  Finally, if
$\bigcup_{T\in\mathcal{K}}T \in \mathcal{F}$ for some collection of
tries $\mathcal{K}\subseteq\mathcal{T}$, by
Remark~\ref{smallerdisjointcollection}, we can replace $\mathcal{K}$
with a collection of tries $\mathcal{L}$ such that 
$\bigcup_{T\in\mathcal{L}}T = \bigcup_{T\in\mathcal{K}}T$ and such
that the tries in $\mathcal{L}$ are disjoint.  Thus we define
$$\Prob\Big(\bigcup_{T\in\mathcal{K}}T\Big)
= \Prob\Big(\bigcup_{T\in\mathcal{L}}T\Big)
:= \sum_{T\in\mathcal{L}}\Prob(T).$$

In the sequel,
for all fixed-population models (fixed $n$) the triple $(\Omega, \mathcal{F}, \Prob)$,
with the components just described, will be the probability space on which 
all random variables are defined. For poissonized random variables,
an additional space derived from  $(\Omega, \mathcal{F}, \Prob)$ will
shortly be discussed.
\section{Poissonization}
\label{Sec:poissonization}
Let $\phi_{n, \collection}(u) = \E[e^{u Y_{n, \collection}}]$ be 
the moment generating function 
of the linear combination $Y_{n, \collection}$. We wish 
to asymptotically identify $\phi_{n, \collection}(u)$.
This type of problem is less difficult in the Poisson 
world.
Define the super moment generating function
$$\Phic(u,z) = \sum_{n=0}^{\infty}\phi_{n, \collection}(u)
\frac{z^{n}}{n!}.$$
We interpret the function
$\Phitc(u,z) := e^{-z}\Phic(u,z)$
as a Poisson transform or ``poissonization.''
Indeed, we have
\begin{align*}
\Phitc (u,z) &=  \sum_{n\geq 0}
      \E[e^{u Y_{n, \collection}}]\, {z^{n}\over
                              n!} e^{-z} \\
                       &= \sum_{n\geq 0}\E[e^{u Y_{n, \collection}}]\, \Prob (N_z = n) \\
                       &= \sum_{n\geq 0}\E[e^{u Y_{N_z, \collection }} \, | \, N_z =
                                   n]\, \Prob (N_z = n) \\
                       &= \E\bigl[e^{u Y_{N_z, \collection}}\bigr],
\end{align*}
where
$N_z$ is a random variable with a Poisson 
distribution with mean $z$. 
Thus, $\Phitc(u,z)$ is the  
moment generating function of
a version of $Y_{n, \collection}$ with~$N_z$
replacing the fixed value $n$, transforming 
the view from a fixed population
to a Poisson-distributed population. 
With the Poisson random variable with a large
parameter being highly concentrated about its mean, and the Poisson
model enjoying several convenient independencies in the subtrees,
this poissonization provides an
asymptotic approximation for the moment generating function of $Y_{n, \collection}$
when we take $z =n$.

Likewise, we can study the
poissonized mean and variance then depoissonize them.
For details on depoissonization see~\cite{Jacquet}, and
for a broad discussion see~\cite{Szpankowski}. 

Note that poissonized random variables
should be defined on the product space 
$$(\Omega, \field, \Prob) \times (\mathbb{R}^+, \mathbb{B}^+, \Prob_0) 
=  (\Omega \times \mathbb{R}^+, \field \times  \mathbb{B}^+, \Prob\times \Prob_0) ,$$
where $\mathbb{R}^+$ is the positive real line, $\mathbb{B}^+$ is the usual Borel
sigma field generated by the intervals $(a, b), b> a>0$, and
$\Prob_0$ is the Poisson probability measure.
Later in the paper, we will use analytic continuation to define $Y_{\Nz,\collection}$,
for $z\in\mathbb{C}$.

\section{Proofs} 
\label{Sec:Proofs}
For any given motif $T$ (of size $\tau$), 
we can express $X_{n, T}$  in terms of indicators. 
 We do not impose a condition on the sizes
of the tries in the collection until Section~\ref{Subsec:char}.
%Let $\widetilde T_n$ be a trie 
%constructed from $n$ random strings. Note that such a trie
%corresponds to an uncountable infinite \emph{set} of $n$-tuples of
%infinite strings in the sample space $\Omega$.
Let
%%%% I_{n, T, w} (\widetilde T_n)
$I_{n,  T, w}$ be the indicator that assumes the value~1, if
a random trie with $n$ leaves
%%%$\widetilde T_n$ 
contains $T$ as 
a subtree rooted at an internal node of 
the trie
%%%%$\widetilde T_n$ 
joined to the root of the subtrie %%%%%$\widetilde T_n$
with a path along which the word $w$ is formed. It is clear that
%%%%= X_{n, T} (\widetilde T_n)
$$X_{n, T} 
= \sum_{w \in \alphabet^*} I_{n, T, w}.$$
%%%%  (\widetilde T_n)
The indicators have the probabilities
\begin{equation}
\Prob(I_{n,T, w} = 1) = {n \choose \tau} \Prob^\tau(w) \bigl(1-\Prob(w)\bigr)^{n-\tau} \Q(T),
\label{Eq:aveindicator}
\end{equation}
where $\Q(T)$ is the shape functional of $T$.  
The linear combination $Y_{n, \collection}$ has the representation
%%%  = Y_{n,\collection} (\widetilde T_n) 
\begin{equation}
\label{Eq:linearcombo}
Y_{n,\collection} 
= \sum_{\nu\in\indexingset} \alpha_\nu \sum_{w \in \alphabet^*} 
I_{n,  T_\nu, w}.
\end{equation}
%%% (\widetilde T_n)
Subsequently, the poissonized linear combination is
\begin{equation}
\label{Eq:poissonizedlinearcombo}
Y_{N_z,\collection}  = \sum_{\nu\in\indexingset} \alpha_\nu \sum_{w \in \alphabet^*} 
I_{N_z,  T_\nu, w}.
\end{equation}
\subsection{The average of the linear combination}
\label{Subsec:mean}
The linear combination~(\ref{Eq:linearcombo}) has the average
%%% (\widetilde T_n)
$$\E[Y_{n,\collection}]  =
   \sum_{\nu \in \indexingset} \alpha_\nu 
       \sum_{w \in \alphabet^*} \E\bigl[I_{n, T_\nu, w} \bigr]  = \sum_{\nu\in \indexingset} \alpha_\nu \sum_{w \in \alphabet^*} {n \choose \tau} 
                \, \Prob^\tau(w) \bigl(1-\Prob(w)\bigr)^{n-\tau} \, \Q(T_\nu).$$
To find the average number of occurrences of a certain motif $T$ (of size $\tau$)
in a trie, 
we can take a one-point indexing set $\indexingset = \{1\}$. 
That is, $T_1 = T$ is the only trie in the set. 
Then, with $\alpha_1= 1$, we have 
$$\E[X_{n,T}] =  \sum_{w \in \alphabet^*} {n \choose \tau} 
                \, \Prob^\tau(w) \bigl(1-\Prob(w)\bigr)^{n-\tau} \Q(T).$$
For later reference, we recall here
that the poissonized average is
\begin{align}
\E[X_{N_z, T}] 
&= \sum_{n=0}^\infty \E[X_{n, T}] \, \frac {z^n} {n!} \, e^{-z} \nonumber \\
&=    \frac { e^{-z}} {\tau!}\sum_{w \in \alphabet^*} \Prob^\tau(w) z^\tau
\sum_{n= \tau}^\infty \frac{\bigl(1-\Prob(w)\bigr)^{n-\tau}\, z^{n-\tau}} {(n-\tau)!}\, \Q(T) \nonumber \\
&= \frac {\Q(T)} {\tau!} \, B_1(z) ,
\label{Eq:poissonizedmean}
\end{align}
where
\begin{equation}
B_1  (z)  :=   \sum_{w\in \alphabet^*} \Prob^{\tau}(w)  z^\tau e^{-\Prob(w) z}.
\label{Eq:B1}
\end{equation}
The function $B_1(z)$ has been analyzed in~\cite{Ward}. It has the asymptotic representation
$$B_1(z) = \left({(\tau-2)! \over h} 
        + \xi_{\tau}(z)\right) \, z + o(z),$$
where $\xi_{\tau}(.)$ is an %small 
oscillating function in the periodic
case, or it is 0 in the aperiodic case. 
%It depends only
%on the size $\tau$ of the motif. 
Therefore,
$$\E[X_{N_z,T}]  = \Bigl(\frac {\Q(T)} {\tau(\tau-1)h} 
             +   \xi_T (z)
       \Bigr) \, z  + o(z).$$
The result for the mean in Proposition~\ref{Prop:mean} 
follows after depoissonization (see~\cite{Jacquet,Szpankowski}).
\begin{remark}
\label{Rem:Poissonizedindicator}
A useful by-product of the argument is that $\E[I_{N_z,  T, w}] = \Q(T) z^\tau \Prob^\tau (w) e^{-\Prob(w)z}\!/ \tau!$.
\end{remark} 
%The average of the linear combination is
%$$\E[Y_{n,\collection}] = \frac { B_1(n)} {\tau!} \, \sum_{\nu\in \indexingset} 
%          \alpha_\nu\,  Q(T_\nu).$$
\subsection{The variance of the linear combination}
\label{Subsec:var}
We cannot derive the variance of the linear combination (\ref{Eq:linearcombo}) via the same straightforward depoissonization argument we utilized to asymptotically
equate $\E[Y_{n,\collection}]$ with $\E[Y_{N_{z},\collection}]$. However, using sharp depoissonization, we \emph{can} obtain the estimate

\begin{equation}
\V[Y_{n,\collection}]=\bigg(\V[Y_{N_{z},\collection}] -z \bigg[{d \over dz} \E[Y_{N_{z},\collection}]\bigg]^{2}\bigg)\bigg|_{z=n}+O(n^{1-\epsilon})\label{Eq:symbolicvariance}
\end{equation}
for some $\epsilon>0$. 
(See~\cite{FuchsTCS,Gaither,FuchsDMTCS} for the details of this
technique; note that the techniques of~\cite{FuchsTCS} could be used
to derive results analogous to those in the present
paper.  The methodology of~\cite{FuchsLee} could probably be used to
establish Theorem~2 as well.)  Equation~(\ref{Eq:symbolicvariance})
implies that to obtain an 
asymptotic expression for $\V[Y_{n,\collection}]$, it will suffice to
derive the asymptotics of both $\V[Y_{N_{z},\collection}]$ and 
${d \over dz} (\E[Y_{N_{z},\collection}])$. We present the steps of the
former calculation in all detail, but leave most of the latter to the
reader; 
they are fairly standard and closely parallel the later stages of the former.

We first obtain an expression for $v(z) :=\V[Y_{N_z, \collection}]$.
It follows from~(\ref{Eq:poissonizedlinearcombo}) that
\begin{align}\V[Y_{N_z, \collection}]&=\sum_{\kappa,\nu \in \indexingset}\alpha_\kappa\alpha_\nu\sum_{w,v \in \Astar}\Cov[I_{N_z,
T_\kappa,w},I_{N_z,T_\nu,v}].\label{indicatorvariance}
\end{align} 
This sum looks daunting to consider in all generality. 
However, as we shall see,
the overwhelming majority of the terms in it will collapse to zero.
We consider four  possible cases for the covariances:
\begin{itemize}
\item [(i)] \textbf{Neither $v$ nor $w$ is a prefix of the other.} 
In this case, 
$\Cov[I_{N_{z},T_\kappa,w},$ $I_{N_{z},T_\nu,v}]=0.$ 
This is a helpful consequence of our working in a Poissonized model---the makeup of the trie at $w$ is 
\emph{independent} of its makeup at $v$ so long as neither $w$ nor $v$ is a prefix of the other. This conclusion holds regardless of whether $\kappa$ and $\nu$ are identical or distinct.\footnote{The same is not true
in the fixed population model. That is, in case (i), $I_{n, T_\kappa,v}$ and $I_{n, T_\nu,w}$ can be dependent.
So, we see the advantage of quickly switching to a Poisson model, rather than transforming
recurrences in the fixed population model.}
\item  [(ii)] \textbf{$v=w$, and $\kappa = \nu$}. 
In this case
we have 
$$\Cov[I_{N_{z},T_\kappa,w},I_{N_z,T_\nu,v}] = \V[I_{N_{z},T_\nu,w}]=  \E[I_{N_z,T_\nu,w}] 
- (\E[I_{N_{z},T_\nu,w}])^{2}.$$
\item  [(iii)] \textbf{$v=w$, and $\kappa \not=\nu$}. 
In this case we have $\E[I_{N_{z},T_\kappa,w}I_{N_z,T_\nu,w}] =0.$
This follows immediately from the nonoverlapping property, which implies that 
$T_\kappa$ and $T_\nu$ cannot both be rooted at the same node. So, in this case
$\Cov[I_{N_z,T_\kappa,w},I_{N_{z},T_\nu,w}]=-\E[\Iwj]\, \E[\Iwi]$.
\item  [(iv)] \textbf{$w$ is a proper prefix of $v$ (or vice-versa)}.
Here, we have $v=wax$, for some $a \in \A$, $x \in \Astar$. Since
$T_{\kappa}$ and $T_{\nu}$ are nonoverlapping, $I_{N_z,T_\kappa,w}$ and $I_{N_z,T_\nu,wax}$ can never simultaneously be $1$, and so we have $\Cov[I_{N_z,T_\kappa,w},I_{N_z,T_\nu,wax}]=-\E[I_{N_z,T_\kappa,w}]\,
\E[I_{N_z, T_\nu ,wax}].$ We note that this result holds even if $\kappa=\nu$.
\end{itemize} \hfil\break
Breaking the covariance expression in (\ref{indicatorvariance}) into these four cases, we obtain
\begin{align}
\V[Y_{\Nz,\collection}]&=\sum_{\nu \in \indexingset}\alpha_\nu^2\sumw \E[I_{N_z,T_\nu,w}]
           -\bigl(\E[I_{N_z, T_\nu, w}]\bigr)^2
\cr
& \qquad {}-\sum_{\substack{\kappa, \nu\in \indexingset \\ \kappa \not =  \nu}}\alpha_\kappa\alpha_\nu \sumw\E[I_{N_z, T_\kappa, w}]\, \E[I_{N_z,T_\nu,w}]\cr
&\qquad {}-2\sum_{\kappa,\nu \in \indexingset}\alpha_\kappa\alpha_\nu\sum_{\substack{w,x \in \Astar \\ a \in \A}}\E[I_{N_z,T_\kappa,w}]\, \E[I_{N_{z},T_\nu,wax}]\cr
&=\sum_{\nu \in  \indexingset}\alpha_\nu^2 
        \sumw \E[I_{N_z,T_\nu,w}] \cr
&\qquad {}-\sum_{\kappa,\nu \in  \indexingset}\alpha_\kappa\alpha_\nu\sumw \E[I_{N_z,T_\kappa,w}]\, \E[I_{N_{z},T_\nu,w}]\cr
&\qquad {} -2\sum_{\kappa,\nu \in  \indexingset}\alpha_\kappa\alpha_\nu\sum_{\substack{w,x \in \Astar \\ a \in \A}}\E[I_{N_z,T_\kappa,w}]\, \E[I_{N_z,T_\nu,wax}].\label{indicatorexpressionforvz}
\end{align}
%We could combine the second and third terms, but the
%case where the two words are identical will require a different analysis from that in which the prefixtitude is strict. %Therefore we leave this expression as it is.

\subsection{Mellin transform}\label{Mellintransformsection}
Our tool to complete this derivation 
is an integral transform. The Mellin transform of a function $f(x)$ is
$$\int_0^\infty f(x) x^{s-1} \, ds,$$
and will be denoted by $f^*(s)$.
For $s \in \mathbb{C}$, the Mellin transform usually exists in
vertical strips in the complex plane of the form
$$ a < \Re\, s < b,$$
for real numbers $a < b$. We shall denote this strip by $\langle a, b \rangle$.
The function~$f(x)$ can be recovered from its transform by a line integral
$$f(x) = \frac  1 {2 \pi i} \int_{c - i \infty} ^ {c + i \infty} f^*(s) x^{-s} \, ds,$$
for any $c\in (a,b)$. 

At the time of calculating the integral in the inversion, one seeks asymptotic
approximations.
One employs the method of ``closing the box.'' (This method is discussed 
in~\cite{Sortingbook} and~\cite{Szpankowski}.)
In this method, one takes the complex integration over the line $c-iM$ 
and $c+iM$, and then closes 
the box connecting the four corners $c\pm iM$, and $d\pm iM$, 
for an arbitrary $d>c$.
The number $M$ is chosen in such a way that no pole is crossed.
Cauchy's Residue Theorem gives
$$\lim_{M \to \infty} \oint f^*(z)z^{-s} \, ds = 2 \pi i  \sum
\mbox{residues of poles in}\, \langle c, d\rangle.$$
The contour integral can be written as
$$
\oint f^*(s)z^{-s} ds = \int_{c-iM}^{d-iM} + \int_{d-iM}^{d+iM} + \int_{d+iM}^{c+iM}+ \int_{c+iM}^{c-iM}.
$$
In the context of random structures, the Mellin transform often includes gamma functions.
In this context, when we let $M\to \infty$, the line integrals at the top and bottom sides of the box approach 0,
as the magnitude of the gamma function decreases exponentially fast with its imaginary part.
Moreover, the integral at the right side of the box 
introduces an error term of the order $O(z^{-d})$.
Hence, we have 
$$ f(z)  =  O(z^{-d}) -  \sum \mbox{residues of poles in}\, \langle c, d\rangle.$$
The problem has now been reduced to residue computation.
See~\cite{Flajolet} for a survey on the use of
the Mellin transform in the analysis of random structures and algorithms,
where the reader can find detailed discussions on the procedure
and other standard tricks of the trade.

By (\ref{indicatorexpressionforvz}) and Remark~\ref{Rem:Poissonizedindicator},
we can now write
$v(z)=v_{1}(z)-v_{2}(z)-2v_{3}(z)$, where 
\begin{align*}
v_1(z)&=\sum_{\nu \in \indexingset}\alpha_\nu^2 {\Q(T_\nu) \over \tau_\nu!} \sumw z^{\tau_\nu}\Prob^{\tau_\nu}(w)e^{-z\Prob(w)},\\
v_2(z)&=\sum_{\kappa,\nu \in \indexingset} \alpha_\kappa\alpha_\nu {\Q(T_\kappa)\, \Q(T_\nu) \over \tau_\kappa!\, 
\tau_\nu!} \sumw z^{\tau_\kappa+\tau_\nu}\Prob^{\tau_\kappa+\tau_\nu}(w) e^{-2z\Prob(w)},\\
v_{3}(z)&=\sum_{\kappa,\nu \in  \indexingset}\alpha_\kappa\alpha_\nu{\Q(T_\kappa)\, \Q(T_\nu) \over \tau_\kappa!\, \tau_\nu!}\sum_{\substack{w,x \in \Astar \\ a \in \A}}z^{\tau_\kappa+\tau_\nu}\, \Prob^{\tau_\kappa+\tau_\nu}(w)\Prob^{\tau_\nu}(ax)e^{-z\Prob(w)(1+\Prob(ax))}.
\end{align*}
Here $\tau_{\nu}$ simply denotes the size of $T_{\nu}$.
\subsection{Fundamental strip}\label{Fundamentalstripsection}
Now we want to take Mellin transforms. It is easy to establish a left-hand boundary for our fundamental 
strip.  Let $\tau=\min\{\tau_\nu\given \nu \in \indexingset\}$, then, as $z\rightarrow 0$, 
we have $v_1(z)=O(z^{\tau})$, and $v_{2}(z)$ and $v_{3}(z)$ are both $O(z^{2{\tau}})$. 
Barring the trivial motifs which are either empty
or have only one string (and therefore create no kind of splitting),
we now consider the motifs with at least two strings.
Since a nontrivial motif has at least two strings for 
its construction, then ${\tau} \ge 2$,
and we have $v(z)=O(z^{2})$, as $z \to 0$. We also note that no member
of a nonoverlapping 
collection of motifs can be of size one or less, as such motifs are overlapping with every
other possible motif.

Finding the right-hand Mellin boundary requires a bit more work. 
We note that $v_{1},v_{2}$ and $v_{3}$ are all $O\Bigl(\sumw (z\Prob(w))^je^{-z\Prob(w)}\Bigr)$, 
as $z \rightarrow \infty$, for some $j \ge 2$ (either $j= \tau_\nu$, 
or $j=\tau_\kappa+\tau_\nu$.) 
For any $\varepsilon \in (0,1)$, we can write 
\begin{equation}
\label{rightmellinbound}
\sumw (z\Prob(w))^je^{-z\Prob(w)}=\sumw \Prob^{1+\varepsilon}(w)\times 
           z^j\Prob^{j-(1+\varepsilon)}(w)e^{-z\Prob(w)}.
\end{equation}
Doing minimization by 
calculus on the expression $z^j\Prob^{j-(1+\varepsilon)}(w)e^{-z\Prob(w)}$, 
with $\Prob(w)$ as the variable, we find that 
$$z^j\Prob^{j-(1+\varepsilon)} (w) e^{-z\Prob(w)} \le z^{1+\varepsilon}(j-1-\varepsilon)^{j-1-\varepsilon}e^{-j+1+\varepsilon}=O(z^{1+\varepsilon}).$$
So, (\ref{rightmellinbound}) implies that
$$\sumw (z\Prob(w))^je^{-z\Prob(w)} = \sumw\Prob^{1+\varepsilon}(w)O(z^{1+\varepsilon}) = O(z^{1+\varepsilon}).$$
Therefore, for each $j = 1, 2, 3$,
we have $v_j(z)=O(z^{1+\varepsilon})$, for every $\varepsilon>0$, as $z \to \infty$.
We have now shown that $\langle -2,-1\rangle$ is a valid fundamental strip for~$v(z)$. 
\subsection{Asymptotics of $v(z)$}\label{AsymptoticsofVsection}
We now take the Mellin transform of $v(z)$. Recalling that $v(z)=v_1 (z)-v_2(z)-2v_3(z)$, we extract the asymptotics of $v(z)$ one piece at a time. We have
$$\vos(s)=\sum_{\nu \in  \indexingset}\alpha_\nu^{2} {\Q(T_\nu) \over \tau_\nu!} \sumw \Prob^{-s}(w\, )\Gamma
               (s+\tau_\nu)
     =  \sum_{\nu \in  \indexingset}\alpha_\nu^{2} {\Q(T_\nu) \over \tau_\nu!} \times
\frac {\Gamma(s+\tau_\nu)} {1-\sum_{j=1}^m p_j^{-s}}.$$ 
Invoking the closing-the-box method, after a residue calculation we 
find the inverse Mellin transform: 
\begin{align*}
v_{1}(z)={z \over h}\sum_{T_\nu \in \indexingset}\alpha_\nu^2\,
{\Q(T_\nu) \over \tau_{\nu}(\tau_{\nu}-1)} + \delta_{\collection,1}(z) z+o(z),
\end{align*}
where $\delta_{\collection, 1}$ is possibly fluctuating 
 with average value zero,
% which has small magnitude in many cases, 
when the probability set is aperiodic, and is $0$ otherwise. 
In the periodic case, 
we can improve the $o(z)$ bound to $O(z^{1-\varepsilon})$, 
for some $0<\varepsilon<1$.

 A comprehensive discussion about the difference between
  the periodic 
and aperiodic cases is given at length in~\cite{Flajolet2010}.  For
readers who want to understand the nuances of these case, this
treatise is strongly recommended.

Regarding $v_2(z)$, we have
\begin{align*}
\vts(s)&=\sum_{\kappa, \nu \in  \indexingset}\alpha_\kappa\alpha_\nu\,  {\Q(T_\kappa)\, \Q(T_\nu) \over \tau_\kappa!\, \tau_\nu!} \sumw 2^{-s-\tau_\kappa-\tau_\nu}\Prob(w)^{-s}\Gamma(s+\tau_\kappa+\tau_\nu)\\
&=\sum_{\kappa, \nu \in \indexingset}\alpha_\kappa\alpha_\nu {\Q(T_\kappa)\, \Q(T_\nu) \over \tau_\kappa!\, 
\tau_\nu!}2^{-s-\tau_\kappa-\tau_\nu} \, \frac{\Gamma(s+\tau_\kappa+\tau_\nu)} {1-\sum_{j=1}^m p_j^{-s}}.
\end{align*}

After a residue calculation, we 
find the inverse Mellin transform:
$$v_2(z) = {z\over h}\sum_{\kappa, \nu\in 
         \indexingset}\alpha_\kappa\alpha_\nu\, {\Q(T_\kappa)\, 
\Q(T_\nu) \over (\tau_\kappa+\tau_\nu)(\tau_\kappa+\tau_\nu-1)}\, 2^{1-\tau_\kappa-\tau_\nu}{\tau_\kappa+\tau_\nu \choose \tau_\nu}
         +\delta_{\collection, 2}(z) z+o(z),$$
where $\delta_{\collection, 2}$ is possibly fluctuating 
with average value zero,
% which has small magnitude in many cases, 
when the probability set is aperiodic, and is $0$ otherwise. 
In the periodic case, 
we can improve the $o(z)$ bound to $O(z^{1-\varepsilon})$, 
for some $\varepsilon>0$.

The Mellin of transform of $v_3(z)$ is the most complicated to calculate. We first note that we have  
\begin{align}
v_3(z)
&=\sum_{\kappa,\nu \in \indexingset}\alpha_\kappa\alpha_\nu{\Q(T_\kappa)\, \Q(T_\nu) \over \tau_\kappa!\, \tau_\nu!}\sum_{\substack{w,x \in \Astar \\ a \in \A}} z^{\tau_\kappa+\tau_\nu}\, \Prob^{\tau_\kappa+\tau_\nu}(w)e^{-z\Prob(w)} \, \Prob^{\tau_\nu}(ax)e^{-z\Prob(w)\Prob(ax)}\cr
&=\sum_{\kappa,\nu \in \indexingset}\alpha_\kappa\alpha_\nu{\Q(T_\kappa)\, \Q(T_\nu) \over \tau_\kappa!\, \tau_\nu!}\sum_{\substack{w,x \in \Astar \\ a \in \A}}z^{\tau_\kappa+\tau_\nu}\Prob^{\tau_\kappa+\tau_\nu}(w)e^{-z\Prob(w)}\Prob^{\tau_\nu} (ax)  \sum_{j=0}^\infty{(-z\Prob(w)\, \Prob(ax))^j \over j!}.\label{domconv}
\end{align}
%The innermost sum satisfies 
%$$\sum_{m \geq 0}\left|{(-z\Prob(w)\, \Prob(ax))^j \over j!}\right|=e^{z\Prob(w)\Prob(ax)}.$$ Thus,
%with 
%\begin{align*}
%&\int_{0}^{\infty}z^{s-1}\sum_{\kappa,\nu \in \indexingset}\alpha_\kappa\alpha_\nu\, {\Q(T_\kappa)\, 
%\Q(T_\nu) \over \tau_\kappa!\, \tau_\nu!}\\
%&\qquad \qquad {}\times \sum_{\substack{w,x \in \Astar \\ a \in \A}}z^{\tau_\kappa+\tau_\nu}\Prob(w)^{\tau_\kappa+\tau_\nu}e^{-z\Prob(w)}\, \Prob^{\tau_\nu}(ax) e^{z\Prob(w)\, \Prob(x)}dz < \infty,
%\end{align*} 
We can carry the Mellin transform inside the innermost sum (recall that
we are dealing with finite indexing sets), yielding
$$\vhs(s) =\sum_{\kappa,\nu \in \indexingset} \alpha_\kappa\alpha_\nu\, {\Q(T_\kappa)\, \Q(T_\nu) \over \tau_\kappa!\, \tau_\nu!}   \sum_{\substack{w,x \in \Astar \\ a \in \A}}\sum_{j= 0}^\infty {(-1)^j \Prob^{\tau_\nu+j}(ax)\, \Prob^{-s}(w)\, \Gamma(s+\tau_\kappa+\tau_\nu+j) \over j!}.$$
Now we approximate the inverse Mellin by closing the box and considering residues. The Gamma functions are all analytic in $\langle -2,-1\rangle$, since $\tau_\kappa,\tau_\nu \ge 2$. So, 
all the singularities come from $\sum_{w \in \Astar}\Prob^{-s}(w)$. Taking the inverse Mellin transform, we have 
\begin{align*}
v_3(z) &={z \over h}\sum_{\kappa, \nu \in \indexingset}\alpha_\kappa\alpha_\nu{\Q(T_\kappa)\, \Q(T_\nu) \over \tau_\kappa\, !\, \tau_\nu!}\sum_{\substack{x \in \Astar \\ a \in \A}}\sum_{j=0}^\infty {(-1)^j \Prob^{\tau_\nu+j}(ax)\, (\tau_\kappa+\tau_\nu + j-2)! \over j! }+\delta_{\collection,3}
(z) z+o(z)\\
&= {z \over h}\sum_{\kappa, \nu \in \indexingset}\alpha_\kappa\alpha_\nu{\Q(T_\kappa)\, \Q(T_\nu) \over \tau_\kappa!\, \tau_\nu!}\sum_{j= 0}^\infty (-1)^j {\sum_{k=1}^m p_k^{\tau_\nu+j} \over 1 - \sum_{k=1}^mp_{k}^{\tau_\nu+j}}
     {(\tau_\kappa + \tau_\nu+j-2)! \over j!}+\delta_{\collection,3}
(z) z+o(z),
\end{align*}
where $\delta_{\collection, 3}$ is  possibly fluctuating 
with average value zero,
%which has small magnitude in many cases, 
when the probability set is aperiodic, and is $0$ otherwise. 
In the periodic case, 
we can improve the $o(z)$ bound to $O(z^{1-\varepsilon})$, 
for some $0< \varepsilon< 1$.

To summarize, we have
\begin{align*}
v_1(z) &={z \over h}\sum_{\nu \in \indexingset}\alpha_\nu^{2} {\Q(T_\nu) \over \tau_\nu(\tau_\nu-1)}\, +\delta_{\collection, 1}(z)z+o(z),\\
v_2(z)&={z \over h}\sum_{\kappa, \nu \in \indexingset}\alpha_\kappa\alpha_\nu \, {\Q(T_\kappa)\, \Q(T_\nu) \over  (\tau_\kappa+\tau_\nu)(\tau_\kappa+\tau_\nu-1)}2^{1-\tau_\kappa-\tau_\nu}{\tau_\kappa+\tau_\nu\choose \tau_\nu}+ \delta_{\collection, 2} (z)z+o(z),\\
v_3(z)&= {z \over h}\sum_{\kappa, \nu \in \indexingset}\alpha_\kappa\alpha_\nu\, {\Q(T_\kappa)\, \Q(T_\nu) \over \tau_\kappa!\, \tau_\nu!}\sum_{j= 0}^\infty (-1)^j {\sum_{k=1}^m p_k^{\tau_\nu+j} \over 1 - \sum_{k=1}^m p_k^{\tau_\nu+j}} \times
 {(\tau_\kappa+\tau_\nu+j-2)! \over j!}  + \delta_{\collection, 3}
(z) z+o(z).
\end{align*}
Since $v(z)= v_{\collection, 1}(z)-v_{\collection, 2} (n)-2v_{\collection, 3} (z)$, we have
\begin{align*}
v(z)&={z \over h}\sum_{\nu \in \indexingset}\alpha_\nu^{2} {\Q(T_\kappa) \over \tau_\nu(\tau_\nu-1)}\, -{2z \over h}\sum_{\kappa, \nu \in \indexingset}\alpha_{\kappa}\alpha_{\nu}{\Q(T_\kappa)\Q(T_\nu) \over \tau_\kappa!\, \tau_\nu!}\\
& \qquad \times \bigg[2^{-\tau_\kappa -\tau_\nu}(\tau_\kappa+\tau_\nu-2)!
+\sum_{j = 0}^\infty (-1)^j {\sum_{k=1}^m p_k^{\tau_\nu+j} \over 1 - \sum_{k=1}^m p_k^{\tau_\nu+j}} \times {(\tau_\kappa + \tau_\nu+j-2)! \over j!}\bigg]+ \delta_\collection (z)z+o(z),
\end{align*}
where $\delta_\collection(t):=\delta_{\collection, 1}(t)-\delta_{\collection, 2}(t)-2\delta_{\collection, 3}(t).$

%\subsection{Asymptotics of $\V[Y_{N_{z},\collection}]$}
We recall that our expression~(\ref{Eq:symbolicvariance}) 
for $\V[Y_{n,\collection}]$ includes $-z \left({d \over dz} \E[Y_{N_{z},\collection}]\right)^{2}$. We must, therefore, calculate the asymptotics of ${d \over dz} \E[Y_{N_{z},\collection}]$. Taking the derivative 
of~(\ref{Eq:poissonizedmean}) and summing over all motifs in $\collection$, we obtain
$${d \over dz} \E[Y_{N_{z},\collection}]=\sum_{\nu \in \indexingset}\alpha_{\nu} {\Q(T_{\nu}) \over \tau_{\nu}!} \sumw \tau_{\nu} z^{\tau_{\nu}-1}\Pw^{\tau_{\nu}}e^{-z\Pw}- z^{\tau_{\nu}}\Pw^{\tau_{\nu}+1}e^{-z\Pw}.$$
From here, using the same techniques we employed to discover the asymptotics of $v(z)$, we find that
$${d \over dz}\E[Y_{N_{z},\collection}]={1 \over h}\sum_{T_{\nu} \in \collection}\alpha_{\nu} {\Q(T_{\nu}) \over \tau_{\nu}(\tau_{\nu}-1)}+\delta_{\collection,4}(z)+o(1).$$

The result of Theorem 2 now follows from depoissonization 
(again see~\cite{Jacquet,Szpankowski}), 
i.e.~by substituting our calculated expressions into (\ref{Eq:symbolicvariance}). Making the substitutions, we find that
\begin{align}\label{Eq:LinearVariance}
\V[Y_{n,\collection}]&={n\over h}\bigg(\sum_{\nu \in \indexingset}\alpha_\nu^{2} {\Q(T_\kappa) \over \tau_\nu(\tau_\nu-1)}\, -2\sum_{\kappa, \nu \in \indexingset}\alpha_{\kappa}\alpha_{\nu}{\Q(T_\kappa)\Q(T_\nu) \over \tau_\kappa!\, \tau_\nu!}\cr
& \qquad\qquad{} \times \bigg[2^{-\tau_\kappa -\tau_\nu}(\tau_\kappa+\tau_\nu-2)!
+\sum_{j = 0}^\infty (-1)^j {\sum_{k=1}^m p_k^{\tau_\nu+j} \over 1 - \sum_{k=1}^m p_k^{\tau_\nu+j}}{(\tau_\kappa + \tau_\nu+j-2)! \over j!}\bigg]\bigg) + \delta_\collection (n)n\cr
& \qquad{}-n\bigg({1 \over h}\sum_{T_{\nu} \in \collection}\alpha_{\nu} {\Q(T_{\nu}) \over \tau_{\nu}(\tau_{\nu}-1)}+\widehat{\delta}_{\collection}(n)\bigg)^{2}+o(n).
\end{align}

We note that in the periodic case, the error term $O(n^{1-\epsilon})$ in our symbolic variance expression (\ref{Eq:symbolicvariance}) will survive the depoissonization process, so that our error in (\ref{Eq:LinearVariance}) will improve to $O(n^{1-\epsilon})$. In the aperiodic case, however, the $O(n^{1-\epsilon})$ bound will be subsumed by the coarser $o(z)$ estimates for our poissonized quantities.
\subsection{Covariance structure}
\label{Subsec:Covar}
To find the variance of the number of occurrences of an individual 
motif $T$ (of size $\tau$)
in a trie, again we take a one-point indexing set $\indexingset =
\{1\}$, with trie $\alpha_1 = 1$, and $T_1 = T$. So,
\begin{align*}
\V[X_{n, T}] &= \biggl[{\Q(T) \over \tau (\tau-1)h}\, 
- {2 \Q^2(T) \over  h}\bigg(\frac{2^{-2\tau}} {2\tau(2\tau-1) } {2\tau
  \choose \tau} +\frac1  {(\tau!)^2}\sum_{j = 0}^\infty (-1)^j
{\sum_{k=1}^m p_k^{j+\tau} \over 1 - \sum_{k=1}^m p_k^{j+\tau}} \times
{(j+ 2\tau-2)! \over j!}\bigg)\\
&\qquad{}+ \delta_{T}(n) - \Big({\Q(T) \over \tau(\tau-1)h} + \widehat \delta_T(n)\Big)^2\biggr]\, n +o(n).
\end{align*}

To compute the covariance between two nonoverlapping
tries $T$ and $\widetilde T$, we take a collection $\collection = \{T, \widetilde T\}$
comprised of only these two trees (of sizes $\tau$ and $\widetilde \tau$, respectively)
and consider the linear combination
$X_{n,T} + X_{n,\widetilde T}$. In this manner we arrive at an analogous expression for $\V[X_{n,T} + X_{n,\widetilde T}].$
Then from the standard relation
$$\V\bigl[X_{n,T} + X_{n,\widetilde T}\bigr] = \V\bigl[X_{n,T} \bigr] 
            + \V\bigl[ X_{n,\widetilde T}\bigr] + 2\, \Cov\bigl[X_{n,T} , X_{n,\widetilde T}\bigr],$$
and the already computed variance,
we find
the covariance

\begin{align*}
\Cov[X_{n,T} , X_{n,\widetilde T}]
&=\bigg[-{2 \Q(T)\, \Q(\widetilde T) \over \tau!\, \widetilde \tau!\,
  h}  \bigg(2^{-\tau -\widetilde\tau}(\tau+\widetilde \tau-2)!\\
&\qquad\qquad\qquad {}+2^{-1}\sum_{j = 0}^\infty (-1)^j\bigg( {\sum_{k=1}^m p_k^{\tau+j} \over 1 - \sum_{k=1}^m p_k^{\tau+j}}+ {\sum_{k=1}^m p_k^{\widetilde\tau+j} \over 1 - \sum_{k=1}^m p_k^{\widetilde\tau+j}}\bigg) \times {(\tau+ \widetilde \tau+j-2)! \over j!}\bigg) \cr
& \qquad\qquad  {} + \frac 1 2\bigl( \delta_{T, \widetilde T}(n) - \delta_T (n) - \delta_{\widetilde T} (n)\bigr)   - {\Q(T)\, \Q(\widetilde T) \over  \tau(\tau-1)\widetilde\tau(\widetilde\tau-1)h^{2}}  \cr
& \qquad \qquad {}+ {1 \over 2h}\bigg({\Q(T) \over \tau(\tau-1)}\bigl(\widehat \delta_{T}(n)-\widehat\delta_{T,\widetilde T}(n)\bigr) + {\Q(\widetilde T) \over \widetilde \tau(\widetilde \tau-1)} \bigl(\widehat\delta_{\widetilde T}(n)-\widehat\delta_{T,\widetilde T}(n)\bigr)\bigg)\cr
&\qquad\qquad {} -(\widehat\delta_{T,\widetilde T}(n)^{2}- \widehat\delta_{T}(n)^{2}-\widehat\delta_{\widetilde T}(n)^{2})
\bigg]\, n+o(n),
\end{align*}
\subsection{The moment generating function of the linear combination}
\label{Subsec:char}
Until now, we have not imposed a condition on the  sizes
of the tries in the collection. However, arguments for the limit distribution
go more smoothly, if we assume the tries in the collection all have the same
size. 

Let $\collection$ be a collection of tries
all having the same size $\tau$. 
The moment generating function $\phi_{n, \collection}(u)$ of~$Y_{n, \collection}$
can be developed recursively.
It is clear that
when $n > \tau$, we do not have a tree with any shape 
from~$\collection$ starting at the root of the trie.  Thus, for $n >
\tau$, we have a recurrence, obtained by conditioning on $N_1, \ldots, N_m$, 
the sizes of the subtrees, and following the first letter in each word, which
is namely
\begin{align*}
\phi_{n, \collection}(u)
&= \E\bigl[e^{u  Y_{n, \collection}} \bigr] \\
&= \sum_{n_{1} + \cdots+n_{m}=n}  \E\Bigl[
  \exp\Bigl( u \sum_{\nu \in \indexingset}  \alpha_\nu X_{n,T_\nu}\Bigr) 
           \given  N_{1}=n_{1},\ldots,N_{m}=n_{m} \Bigr]\ \Prob(N_{1}=n_{1},\ldots,N_{m}=n_{m})   \\
&= \sum_{n_{1}+\cdots+n_{m}=n} 
    \E\Bigl[
  \exp\Bigl( u \sum_{\nu \in \indexingset} \alpha_\nu\sum_{j=1}^m X_{n_j, T_\nu} \Bigr) \Big]  {n\choose n_{1},\ldots,n_{m}}\,
   p_{1}^{n_{1}}\ldots p_{m}^{n_{m}}  \\
&=  \sum_{n_{1}+\cdots+n_{m}=n} \E\big[\exp\big(uY_{n_{1}, \collection} + \cdots 
   + uY_{n_{m}, \collection} \big)\big] {n\choose n_{1},\ldots,n_{m}}\,
   p_{1}^{n_{1}}\ldots p_{m}^{n_{m}}. 
\end{align*}
By the independence in the subtrees, we can write, for $n > \tau$,

\begin{equation}
\phi_{n, \collection}(u) = n!\, 
\sum_{n_{1}+\cdots+n_{m}=n}
\prod_{j=1}^{m}\frac{\phi_{n_{j}, \collection}(u)\, p_{j}^{n_{j}}}{n_{j}!}.
\label{Eq:nmorethantau}
\end{equation}
Note that, for $n < \tau$, we have $Y_{n, \collection} = 0$, so
\begin{equation}
\label{Eq:nlessthantau}
\phi_{n, \collection} (u) = \E\bigl[e^{u Y_{n, \collection }}\bigr] = \E[e^0] = 1, \qquad\hbox{for \ } n <
  \tau.
\end{equation}

\begin{lemma}
\label{Lm:poissonizedmgf}
The poissonized moment generating 
 function $\Phic := e^{z}\E[e^{u Y_{N_z,
       \collection}}]$ satisfies the recurrence
\begin{align}\label{phibound}
\Phitc(u,z) = \bigl(\phi_{\tau, \collection}(u)-1\bigr)\Big(1-\sum_{j=1}^{m}p_j^{\tau}\Big)\, \frac{z^{\tau}}{\tau!} \, e^{-z}
          + \prod_{j=1}^{m} \Phitc(u,p_j z).\end{align}
\end{lemma}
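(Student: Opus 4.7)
The plan is to derive~(\ref{phibound}) by feeding the fixed-population recurrence~(\ref{Eq:nmorethantau}) into the exponential generating function $\Phic$ and then converting the resulting identity into its poissonized form by distributing $e^{-z}=\prod_{j=1}^m e^{-p_jz}$, an identity that holds because $p_1+\cdots+p_m=1$. I begin by splitting the defining series $\Phic(u,z)=\sum_{n\geq 0}\phi_{n,\collection}(u)\,z^n/n!$ according to whether $n<\tau$, $n=\tau$, or $n>\tau$. The first range contributes the truncated exponential $\sum_{n=0}^{\tau-1}z^n/n!$ by~(\ref{Eq:nlessthantau}), while in the third range I substitute~(\ref{Eq:nmorethantau}), which expresses $\phi_{n,\collection}(u)/n!$ as the $m$-fold convolution of the sequences $\phi_{n_j,\collection}(u)\,p_j^{n_j}/n_j!$.

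The key observation is that the \emph{full} convolution generating function (with no truncation in $n$) is exactly $\prod_{j=1}^m \Phic(u,p_j z)$. Consequently, the $n>\tau$ tail of $\Phic(u,z)$ equals $\prod_{j=1}^m \Phic(u,p_j z)$ minus the contributions indexed by $n\leq\tau$, which I evaluate directly. For $n<\tau$, every decomposition $n_1+\cdots+n_m=n$ has all $n_j<\tau$, so each $\phi_{n_j,\collection}(u)$ equals $1$ and the convolution collapses via the multinomial theorem to $(p_1+\cdots+p_m)^n/n!=1/n!$. For $n=\tau$ the decompositions split into the bulk cases with every $n_j<\tau$ (whose $\phi$-factors are again all $1$) and the $m$ ``corner'' cases in which a single coordinate carries the full weight $\tau$; only the corners introduce the non-unit factor $\phi_{\tau,\collection}(u)$, and a short calculation then shows that the $n=\tau$ coefficient of $\prod_{j=1}^m \Phic(u,p_j z)$ is $1/\tau!+(\phi_{\tau,\collection}(u)-1)\sum_{j=1}^m p_j^\tau/\tau!$.

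Assembling the pieces, the truncated exponentials in $\Phic(u,z)$ cancel against the $n<\tau$ corrections, and the remaining $z^\tau$-terms collapse to $\Phic(u,z)=(\phi_{\tau,\collection}(u)-1)\bigl(1-\sum_{j=1}^m p_j^\tau\bigr)\,z^\tau/\tau!+\prod_{j=1}^m \Phic(u,p_j z)$. Multiplying through by $e^{-z}$ and using $e^{-z}=\prod_{j=1}^m e^{-p_j z}$ to push the exponential factor inside the product converts $\prod_{j=1}^m\Phic(u,p_j z)$ into $\prod_{j=1}^m\Phitc(u,p_j z)$, which is exactly~(\ref{phibound}). The main obstacle is the $n=\tau$ bookkeeping: one has to separate out the $m$ corner decompositions that hand the full size $\tau$ to a single subtree, since these are the only ones where $\phi_{\tau,\collection}(u)$ rather than $1$ appears, and it is precisely the complementary weight $1-\sum_{j=1}^m p_j^\tau$ of the remaining $n=\tau$ decompositions that furnishes the prefactor in the recurrence; everything else is routine generating-function manipulation.
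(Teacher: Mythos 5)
Your proposal is correct and takes essentially the same route as the paper: split the generating series at $n<\tau$, $n=\tau$, $n>\tau$, feed the recurrence~(\ref{Eq:nmorethantau}) into the $n>\tau$ tail, identify the full convolution with $\prod_{j}\Phic(u,p_jz)$, and peel off the low-order corrections using the boundary values $\phi_{n,\collection}=1$ for $n<\tau$, the multinomial theorem, and the $m$ corner decompositions at $n=\tau$. The only cosmetic difference is that you work with $\Phic$ and multiply by $e^{-z}$ at the end, whereas the paper carries the $e^{-z}$ factor throughout; the $n=\tau$ bookkeeping and the resulting prefactor $(\phi_{\tau,\collection}(u)-1)\big(1-\sum_j p_j^\tau\big)/\tau!$ come out the same way.
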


\begin{proof}
We compute $\Phic(u,z)$ from~(\ref{Eq:nmorethantau}) 
and the boundary conditions~(\ref{Eq:nlessthantau}).  
We get
\begin{align}
\Phitc(u,z)
&= e^{-z}\sum_{n=0}^{\tau-1} \frac{z^{n}}{n!}
+ \phi_{\tau, \collection}(u)\frac{z^{\tau}}{\tau!}\, e^{-z} + \sum_{n=\tau+1}^{\infty}\sum_{n_{1}+\cdots+n_{m}=n}
\prod_{j=1}^{m}\frac{\phi_{n_j, \collection}(u) p_j^{n_j}z ^{n_j}e^{-p_j z}}{n_{j}!}\nonumber \\
&= e^{-z}\sum_{n=0}^{\tau-1}\frac{z^{n}}{n!}
+ \phi_{\tau, \collection}(u)\frac{z^{\tau}}{\tau!}\, e^{-z} + e^{-z}\sum_{n=0}^{\infty}\sum_{n_{1}+\cdots+n_{m}=n}
\prod_{j=1}^{m}\frac{\phi_{n_j, \collection}(u)(p_{j}z)^{n_{j}}}{n_{j}!} \label{Eq:penultimate}\\
& \qquad {} - e^{-z}\sum_{n=0}^{\tau}\sum_{n_{1}+\cdots+n_{m}=n}
\prod_{j=1}^{m}\frac{{\phi_{n_j, \collection}(u)}(p_{j}z)^{n_{j}}}{n_{j}!}. \nonumber
\end{align}
For $n < \tau$, the solution of the equation $n_1 + \cdots + n_m =n$
in nonnegative integers yields nonnegative integers~$n_j$ that are all less than
$\tau$, with corresponding $\phi_{n_j, \collection} (u) = 1$, for $j = 1, \ldots, m$.
In this case,
the product in~(\ref{Eq:penultimate}) of the previous display
becomes 
$$\prod_{j=1}^{m}\frac{\phi_{n_j, \collection}(u)(p_{j}z)^{n_{j}}}{n_{j}!} 
= \prod_{j=1}^{m}\frac{(p_{j}z)^{n_{j}}}{n_{j}!}.$$
In the case $n = \tau$, we have two cases: 
\begin{itemize}
\item [(i)] The integer solution gives 
all variables equal to 0, except $n_r$, for some $1 \le r \le m$,
which must be equal to $\tau$. 
In this case we have
\begin{align*}
\prod_{j=1}^{m}\frac{\phi_{n_j, \collection}(u)(p_{j}z)^{n_{j}}}{n_{j}!} 
%&= \sum_{r=1}^m \Bigl(\prod_{j=1  \atop {j \not = r}}^{m}
%  \frac{\phi_{n_j, \collection}(u)(p_{j}z)^{n_{j}}}{n_{j}!} \Bigr)
%    \times \frac {\phi_{n_r, \collection} (u) (p_r z)^{n_r}} {n_r!}\cr
= \sum_{r=1}^m \Bigl(\prod_{j=1  \atop {j \not = r}}^{m}
   \frac{\phi_{0, \collection}(u)(p_{j}z)^0}{0!} \Bigr)
   \times \frac {\phi_{\tau, \collection} (u) (p_r z)^{\tau}} {\tau!}
=  \frac {\phi_{\tau, \collection} (u) z^{\tau}} {\tau!} \sum_{r=1}^m p_r^{\tau}.
\end{align*}
\item [(ii)] The integer solution gives 
all variables $n_j$ less than $\tau$, yielding
$$\prod_{j=1}^{m}\frac{\phi_{n_j, \collection}(u)(p_{j}z)^{n_{j}}}{n_{j}!} 
= \prod_{j=1}^{m}\frac{(p_{j}z)^{n_{j}}}{n_{j}!}.$$
\end{itemize}
The following calculation ensues:
\begin{align*}
\Phitc(u,z) &= e^{-z}\sum_{n=0}^{\tau-1}\frac{z^{n}}{n!}
   + \phi_{\tau, \collection}(u)\frac{z^{\tau}}{\tau!} e^{-z}
   + \prod_{j=1}^{m}\Phitc(u,p_{j}z)\cr
&\qquad {} - e^{-z}\sum_{n=0}^{\tau}\frac{z^{n}}{n!}\sum_{n_{1}+\cdots+n_{m}=n}
   {n\choose n_{1},\ldots,n_{m}}\prod_{j=1}^{m}p_{j}^{n_{j}} \cr
&\qquad {} + e^{-z}\sum_{j=1}^m\frac{z^{\tau}}{\tau!}
   p_{j}^{\tau}-e^{-z}\frac{\phi_{\tau, \collection}(u)z^{\tau}}{\tau!}\sum_{j=1}^{m}p_{j}^{\tau}.
\end{align*}
By the multinomial theorem,
the sum involving the multinomial coefficients is 1, and we get
\begin{align*}              
\Phitc(u,z)
&= e^{-z} \sum_{n=0}^{\tau-1}\frac{z^{n}}{n!}
  + \phi_{\tau, \collection}(u)\frac{z^{\tau}}{\tau!}e^{-z}
  + \prod_{j=1}^{m}\Phic(u,p_{j}z) 
  - e^{-z}\sum_{n=0}^{\tau}\frac{z^{n}}{n!} + \frac{(1-\phi_{\tau, \collection}(u))z^{\tau}}{\tau!} e^{-z}\sum_{j=1}^{m}p_{j}^{\tau}\cr   
&= (\phi_{\tau, \collection}(u)-1)\frac{z^{\tau}}{\tau!}
  \Big(1-\sum_{j=1}^{m}p_{j}^{\tau}\Big)e^{-z} + \prod_{j=1}^{m}\Phitc(u,p_{j}z).
\end{align*}
\end{proof}
\subsection{Limit distributions}
\label{Susbec:limit}
Our final task is to prove Theorem~\ref{Theo:main}, which states that after centralization and normalization the linear combination $Y_{n,\collection}$ converges in distribution to the standard normal distribution. For this job we use a powerful result from Jacquet and Szpankowski (adapted to our purposes) which is specifically formulated for CLT-type arguments which involve poissonization. 

\begin{lemma}\label{Lm:distdepoi}[Jacquet and Szpankowski, 1998]
Let $W_{n}$ be a random variable and  $\Gt(u,z)=\E[e^{uW_{N_{z}}}]$
its poissonized moment generating function. 
Consider $u$ in a fixed interval on the real line, centered at the
origin (the values of the constants depend on the length of this fixed interval).
Suppose further that 
there exist values $\epsilon>0$, $A>0$, $B>0$, and $R>0$, such that 
the following conditions hold:
\begin{enumerate}
\item We have
$$
|W_{n}| \leq Cn, \qquad 
\E[W_{N_{z}}] = \widetilde{\mu}(z)z + o(z), \qquad\hbox{and}\qquad 
\V[W_{N_{z}}] = \widetilde{\sigma}^{2}(z)z + o(z),
$$
for some fixed constant $C>0$ and some bounded functions $\widetilde{\mu}(z)$ and $\widetilde{\sigma}^{2}(z)$.
\item In the cone $\mathcal{C}(\epsilon) =\{z=x+iy\;:\;|y| \leq
  x^{1-\epsilon}\}$, we have the bound
$$
\big|\log\bigl(\Gt(u,z)\bigr)\big| \leq B
|z|^{1+\epsilon},\qquad\hbox{when $|z|>R$}.
$$
\item Let $V_{n}:=\V[W_{n}]$.  
Outside the cone $\mathcal{C}(\epsilon)$, when $|z| = n$, we have the bound
$$
\Big|e^{z}\Gt\Big(\frac{u}{\sqrt{V_{n}}},\;z\Big)\Big| \leq \exp(n-An^{1/2+\epsilon}),
$$
for sufficiently large $n$. 
\end{enumerate}
Then the random variable $(W_{n}-\widetilde{\mu}(n))/\sqrt{V_{n}}$ converges in distribution to a standard normal.
\end{lemma}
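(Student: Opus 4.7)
\emph{Proof plan.}
The plan is to carry out a saddle-point depoissonization of the Poisson moment generating function, in the spirit of the Jacquet--Szpankowski analytic depoissonization framework. Concretely, for $u$ in a fixed real neighborhood of the origin the goal is to show that the rescaled moment generating function
\[
M_n(u) := \phi_n\bigl(u/\sqrt{V_n}\bigr)\, \exp\bigl(-u\, \widetilde{\mu}(n)\, n/\sqrt{V_n}\bigr), \qquad \phi_n(u) := \E[e^{u W_n}],
\]
converges to $e^{u^2/2}$ as $n\to\infty$. Curtiss's theorem, which states that MGF convergence on a neighborhood of $0$ implies weak convergence, then yields the claimed limit $\normal(0,1)$.

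The first step is Cauchy inversion. Because $e^z\, \Gt(u,z) = \sum_{n\ge 0}\phi_n(u)\, z^n/n!$ is the exponential generating function of $(\phi_n(u))_n$, Cauchy's formula on the contour $|z|=n$ gives
\[
\phi_n(u) = \frac{n!}{2\pi i}\oint_{|z|=n} e^z\, \Gt(u,z)\, z^{-n-1}\, dz.
\]
Parametrize $z = n e^{i\theta}$; the integrand $e^z/z^{n+1}$ has a saddle at $z = n$, and Stirling's approximation $n!\sim \sqrt{2\pi}\, n^{n+1/2}\, e^{-n}$ normalizes the leading order of this integral to $\Gt(u,n)$. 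I split the contour into a central arc $|\theta|\le n^{-\epsilon}$, which lies inside the cone $\mathcal{C}(\epsilon)$ of condition~(2), and the complementary tail arc.

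On the central arc, conditions~(1) and~(2) together control the integrand. Condition~(2) guarantees that $\log\Gt(u,z)$ is analytic and polynomially bounded on $\mathcal{C}(\epsilon)$, and combined with the mean and variance asymptotics of condition~(1) this yields, via Cauchy estimates in the $u$-variable, the expansion
\[
\log\Gt(u,z) = \widetilde{\mu}(z)\, z\, u + \tfrac{1}{2}\, \widetilde{\sigma}^2(z)\, z\, u^2 + R(u,z), \qquad R(u,z) = O\bigl(|u|^3 |z|^{1+\epsilon}\bigr),
\]
uniformly on the central arc. Replacing $u$ by $u/\sqrt{V_n}$ and using $V_n \sim \widetilde{\sigma}^2(n)\, n$ makes $R = o(1)$; expanding $e^z/z^{n+1}$ quadratically in $\theta$ and performing the ensuing Gaussian integral yield a central contribution of $e^{u^2/2}(1+o(1))$ after the centering factor $\exp(-u\, \widetilde{\mu}(n)\, n/\sqrt{V_n})$ is applied.

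On the tail arc, condition~(3) gives $|e^z\, \Gt(u/\sqrt{V_n}, z)|\le \exp(n - A n^{1/2+\epsilon})$ on $|z|=n$, and dividing by $|n^{n+1}/n!| = O(n^{1/2}e^n)$ leaves a tail of order $O(n^{1/2}\exp(-A n^{1/2+\epsilon})) = o(1)$. Assembling the central and tail contributions yields $M_n(u)\to e^{u^2/2}$, completing the proof. The hardest step will be the uniform control of the cubic remainder $R(u,z)$ in the central-arc analysis: one must upgrade the scalar $o(z)$ moment estimates of condition~(1) into Cauchy-type bounds for $\log\Gt(u,z)$ that are uniform over the whole central arc, and then verify that both this remainder and the higher-order Laplace corrections vanish after the rescaling $u\mapsto u/\sqrt{V_n}$. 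This is the technical heart of the argument, where all three hypotheses must interact correctly.
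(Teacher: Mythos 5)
The paper does not prove Lemma~\ref{Lm:distdepoi}: it is imported (``adapted to our purposes'') from Jacquet and Szpankowski (1998), reference~\cite{Jacquet}, and used as a black box to derive Theorem~\ref{Theo:main} by verifying its three hypotheses. So there is no in-paper proof to compare against. Judged on its own, your saddle-point depoissonization sketch is the right skeleton and matches the argument in \cite{Jacquet}: Cauchy inversion of $\Phic(u,z)=e^z\Gt(u,z)$ on $|z|=n$, a central-arc estimate driven by Conditions~1 and~2 (with Cauchy estimates in the $u$-variable to control the cubic remainder after the rescaling $u\mapsto u/\sqrt{V_n}$, which forces $\epsilon<1/2$), a tail-arc estimate from Condition~3, and Curtiss's theorem to pass from MGF convergence to weak convergence.

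One place where the sketch is too quick, and where a careless reader would get the wrong normalizer: the Gaussian integral on the central arc does \emph{not} simply reproduce $\Gt(u/\sqrt{V_n},n)$. When you expand $e^{z}z^{-n-1}$ to second order in $\theta$ and $\log\Gt(u,z)$ to second order in $z-n$, the linear term $i s\,\partial_z\log\Gt$ (with $s=n\theta$) interacts with the Gaussian weight $e^{-s^2/(2n)}$ and contributes an extra factor $\exp\bigl(-\tfrac{n}{2}(\partial_z\log\Gt(u,n))^2\bigr)$. At order $u^2$ this is exactly the correction $-n\bigl(\partial_z\E[W_{N_z}]\big|_{z=n}\bigr)^2$ that distinguishes the fixed-$n$ variance $V_n=\V[W_n]$ from the Poisson variance $\V[W_{N_n}]$ (it is the same relation the paper records in equation~(\ref{Eq:symbolicvariance})). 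Because the lemma normalizes by $\sqrt{V_n}$ and not by $\sqrt{\V[W_{N_n}]}$, this cross-term is not optional---it is the mechanism by which the central-arc contribution turns into $e^{u^2/2}$ rather than $\exp\bigl(u^2\V[W_{N_n}]/(2V_n)\bigr)$. You should also note explicitly that the $o(z)$ asymptotics in Condition~1 must be extended to complex $z$ inside the cone (the paper flags this via the remark about analytic continuation of $Y_{N_z,\collection}$), since the Cauchy estimates you invoke live on the central arc, not on the real axis.
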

To prove Theorem~\ref{Theo:main}, it suffices to show that the linear combination $Y_{n,\collection}$ satisfies the conditions  of Lemma~\ref{Lm:distdepoi}, with $\Phitc(u,z)$ playing the role of $\Gt(u,z)$. Our proof of this parallels the argument given in \cite{Jacquet}.

We have already proved that Condition 1 holds for $Y_{N_{z},\collection}$, as we computed its mean and variance en route to proving Proposition~\ref{Prop:mean} and Theorem~\ref{Theo:covar}. The requirement that $|Y_{n,\collection}| \leq Cn$ follows from the fact that $Y_{n,\collection} = \sum_{\nu \in \indexingset} \alpha_\nu X_{n, T_\nu}$ where $\indexingset$ is assumed to be finite (see the paragraph at the end of Section \ref{Sec:Results}), and we know that each $ X_{n, T_\nu} \leq n$ since a given motif can occur at most $n$ times in a trie of size $n$.

Regarding Condition 2, we note that the assumption that $|y| \leq x^{1-\epsilon}$ implies that $|z| \leq x\sqrt{1+x^{-2\epsilon}} \leq x(1+x^{-\epsilon})$. From there we can conclude that 
\begin{align*}
\big|e^{-z}\big|=e^{-x} \leq \exp\Big(-\frac{|z|}{1+x^{-\epsilon}}\Big).
\end{align*}
Plugging this bound into the definition of $\Phitc$, we obtain
\begin{align*}
\big|\Phitc(u,z)\big| \leq |e^{-z}| \sum_{n \geq 0}\frac{|ze^{uC}|^{n}}{n!} \leq \exp\Big(-\frac{|z|}{1+x^{-\epsilon}}+|z|e^{uC}\Big),
\end{align*}
from which Condition 2 readily follows. (In the first inequality, we used the hypothesis that $|Y_{n}| \leq Cn$.)

Condition 3 is the most interesting to verify. Our device (inspired by \cite{Jacquet}) will be to induct over a sequence of nested domains 
\begin{align*}
D_{k}=\{z\;:\; \xi \leq |z| \leq  \xi \lambda^{k}\},
\end{align*}
where $\xi>0$ and $1 < \lambda < \big(\max_{1\le j\le  m}\{p_{j}\}\big)^{-1}$ are fixed quantities. We note that whenever $z \in D_{k+1}$, we have $p_{j}z \in D_{k}$ for every $j$.

The recurrence~(\ref{phibound}) from Lemma \ref{Lm:poissonizedmgf} lies at the heart of our methodology. Unfortunately~(\ref{phibound}) is phrased in terms of a product, and our technique requires a sum. We circumvent this problem by taking the log of~(\ref{phibound}). Before doing this, however, we must rewrite its right-hand side as a product (after first multiplying through by $e^{z}$):
\begin{align*}
\Phic(u,z) &=  \Big(\prod_{j=1}^{m}\Phic(u,p_{j}z)\Big) \times \Big(\frac{\bigl(\phi_{\tau, \collection}(u)-1\bigr)\big(1-\sum_{j=1}^{m}p_j^{\tau}\big)\, \frac{z^{\tau}}{\tau!}}{\prod_{j=1}^{m}\Phic(u,p_{j}z)}\;+\;1\Big)\\
&=  \Big(\prod_{j=1}^{m}\Phic(u,p_{j}z)\Big) \times \Big(\frac{\bigl(\phi_{\tau, \collection}(u)-1\bigr)\big(1-\sum_{j=1}^{m}p_j^{\tau}\big)\, \frac{z^{\tau}}{\tau!}}{           \Phic(u,z) - \bigl(\phi_{\tau, \collection}(u)-1\bigr)\Big(1-\sum_{j=1}^{m}p_j^{\tau}\Big)\, \frac{z^{\tau}}{\tau!}    }\;+\;1\Big).\notag
\end{align*}
Solving (\ref{phibound}) for $\prod_{j=1}^{m}\Phic(u,p_{j}z)$,
plugging into the denominator of the line above, and simplifying, we obtain
\begin{align*}
\Phic(u,z)&=  \Big(\prod_{j=1}^{m}\Phic(u,p_{j}z)\Big) \times \Big( \frac{1}{\frac{\Phic(u,z)}{ (\phi_{\tau, \collection}(u)-1)(1-\sum_{j=1}^{m}p_j^{\tau})\, \frac{z^{\tau}}{\tau!}} -1}+1\Big).
\end{align*}

To simplify the notation, we define $u_{n} := u/\sqrt{V_{n}}$\thinspace.
Now we want to bound the rightmost term close to~$1$.
To do that, we note that for any $\alpha>0$ we may assume that
$|\Phic(u_{n},z)| \geq e^{|z|^{1-\alpha}}$, because if this is not so, our
induction hypothesis (which appears ahead, at~(\ref{induchyp})) is
already satisfied. 
With this assumption, we obtain
\begin{align*}
\frac{|\Phic(u_{n},z)|}{ \bigl(\phi_{\tau, \collection}(u_{n})-1\bigr)\Big(1-\sum_{j=1}^{m}p_j^{\tau}\Big)\, \frac{|z|^{\tau}}{\tau!}}\geq \frac{e^{|z|^{1-\alpha}}}{\frac{\tau u_{n}}{1-\tau u_{n}} \Big(1-\sum_{j=1}^{m}p_j^{\tau}\Big)\, \frac{|z|^{\tau}}{\tau!}}.
 \end{align*}
From here we compute 
\begin{align*}
\Phic(u_{n},z)=\Big(\prod_{j=1}^{m}\Phic(u_{n},p_{j}z)\Big) \times \bigl(1+ O(n^{-1/2}|z|^{\tau}e^{-|z|^{1-\alpha}})\bigr).
\end{align*}
Taking the logarithm and bounding, we find that
\begin{align}\label{logrec}
\big|L(u_{n},z)\big| \leq \sum_{j=1}^{m} \big|L(u_{n},p_{j}z)\big| + C |z|^{\tau}e^{-|z|^{1-\alpha}}n^{-1/2},
\end{align}
where we write $L(u,z):=\log(\Phic(u,z)).$ Here the constant $C$ depends on $u$ and $p_{j}$, but is independent of $n$. 

We now state our inductive hypothesis: For $C$ as given in~(\ref{logrec}) and some constant $A>0$, we have
\begin{align}\label{induchyp}
|L(u_{n},z)| \leq |z|-A|z|^{1/2+\epsilon}+ C n^{-1/2} \sum_{\ell=0}^{k}\sum_{w \in \mathcal{A}^{\ell}}|\Pw z|^{\tau}e^{-\Pw|z|^{1-\alpha}},
\end{align}
for every $z \in D_{k}\cap \mathcal{C}(\epsilon)^{C}$ such that $|z| \leq n$, where  $\mathcal{C}(\epsilon)^{C}$ denotes the complement of the cone $\mathcal{C}(\epsilon)$.

We first handle the $k=0$ step. In $D_{0} \cap \mathcal{C}(\epsilon)^{C}$ our hypothesis is that 
\begin{align}\label{inithyp}
\big|L(u_{n},z)\big| \leq |z|-An^{-1/2}|z|^{1/2+\epsilon}+C|z|^{\tau}e^{-|z|^{1-\alpha}}.
\end{align}
Now, by choosing our starting-radius $\xi$ and our starting $n$-value large enough, we can guarantee that $|\Phic(u_{n},z)|$ is as close as we like to $|e^{z}|$. And we know that $|e^{z}|<e^{|z|}$ since we are outside a cone $\mathcal{C}(\epsilon)$ which contains the positive real axis. So under these circumstances, we can always find $A$ such that
$$\big|\Phic(u_{n},z)\big|< \exp\bigl(|z|-A|z|^{1/2+\epsilon}+Cn^{-1/2}|z|^{\tau}e^{-|z|^{1-\alpha}}\bigr).$$
Since $L(u_{n},z)=\log(\Phic(u_{n},z))$, we can conclude that~(\ref{inithyp}) holds (though we may have to adjust $\xi$ slightly,
since for every $x$ we have $\log|x| \leq |\log(x)|$ which is the wrong direction
for us here).  This concludes the initial step.

For the inductive step, we assume that~(\ref{induchyp}) holds for some $k$, and take $z \in D_{k+1} \cap \mathcal{C}(\epsilon)^{C}$. Then, by our bounded recurrence~(\ref{logrec}), we have
\begin{align*}
|L(u_{n},z)| &\leq \sum_{j=1}^{m}\bigg(|p_{j}z| - A|p_{j}z|^{1/2+\epsilon}+ Cn^{-1/2}\sum_{\ell=0}^{k}\sum_{w \in \mathcal{A}^{\ell}}(\Pw p_{j}z)^{\tau}e^{-\Pw |p_{j}z|^{1-\alpha}}\bigg)+  Cn^{-1/2}|z|^{\tau}e^{-|z|^{1-\alpha}}\\
& \leq |z| - A|z|^{1/2+\epsilon} +Cn^{-1/2}\sum_{\ell=0}^{k+1}\sum_{w \in \mathcal{A}^{\ell}}(\Pw z)^{\tau}e^{-\Pw |z|^{1-\alpha}},
\end{align*}
which shows that the induction hypothesis holds for $k+1$. We can then conclude that our hypothesis holds on the intersection of the circle $\{|z|=n\}$ and the complement of the cone $\mathcal{C}(\epsilon)$. 

It remains, however, to bound the extraneous term in~(\ref{induchyp}),
which is not found in Condition~3 of  Lemma~\ref{Lm:distdepoi}.
We can obtain the requisite bound by using the Mellin transform. We rephrase the formulation as
\begin{align}\label{xtoy}
\sum_{\ell=0}^{k}\sum_{w \in \mathcal{A}^{\ell}}(\Pw z)^{\tau}e^{-\Pw |z|^{1-\alpha}}=z^{\tau\alpha}\sum_{\ell=0}^{k}\sum_{w \in \mathcal{A}^{\ell}}(\Pw  z^{1-\alpha})^{\tau}e^{-\Pw |z|^{1-\alpha}},
\end{align}
and then define
\begin{align*}
f(y)=\sum_{w \in \mathcal{A}^{*}}(\Pw y)^{\tau}e^{-\Pw y},
\end{align*}
so that $|z^{\tau\alpha}|f(|z|^{1-\alpha})$ bounds the right-hand side of~(\ref{xtoy}).
Taking the Mellin transform of $f(y)$, we obtain
\begin{align*}
\fs(s)=\sum_{w \in \mathcal{A}^{*}}\Pw^{-s}\Gamma(s+\tau)=\frac{\Gamma(s+\tau)}{1-\sum_{j=1}^{m}p_{j}^{-s}},
\end{align*}
which is valid in the strip $\langle -\tau,-1 \rangle$. We then evaluate the inverse Mellin integral by taking the residue at $s=-1$, and closing the box, and recover the value
\begin{align*}
f(y)= \frac{\Gamma(\tau-1)}{h}\;y\bigl(1+\delta(y)+ o(1)\bigr), 
\end{align*}
where $\delta(y)$ is a fluctuating function of bounded magnitude if our probability-family is periodic, and $0$ otherwise. Now, since  $|z^{\tau\alpha}|f(|z|^{1-\alpha})$ bounds the right-hand side of~(\ref{xtoy}), we have
\begin{align*}
\sum_{\ell=0}^{k}\sum_{w \in \mathcal{A}^{\ell}}(\Pw |z|)^{\tau}e^{-\Pw |z|^{1-\alpha}} \leq  
\frac{1}{h}|z|^{1+(\tau-1)\alpha}\;\Gamma(\tau-1)\bigl(1+o(1)\bigr).
\end{align*}
In our inductive hypothesis~(\ref{induchyp}),
the left-hand side of the above inequality is preceded by $n^{-1/2}$; the overall effect is of a term of order $n^{1/2+(\tau-1)\alpha}$. So as long as we choose $\alpha$ satisfying $(\tau-1)\alpha<\epsilon$, this term is subsumed by the other two, and Condition 3 of  Lemma~\ref{Lm:distdepoi} is proven to hold for $Y_{n,\collection}$. And this proves convergence to a normal distribution, as claimed in Theorem~\ref{Theo:main}. 
\section{Examples}
\label{Sec:examples}
In this section we provide some examples of particular collections that appear often
in writings on this subject. 
\subsection{A binary trie}
The trie arising on the alphabet $\{0,1\}$ is common 
in computer applications as 
a data structure. Suppose the probability of 1 is $p$, 
and the probability of 0 is $q = 1 - p$.
Here, the entropy of the probability source is
$h = - p \ln p - q \ln q $. Suppose the motif $T$ 
is the tree in Figure~\ref{Fig:binarymotif}.

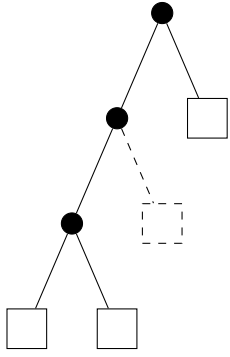
\begin{figure}[!ht]
\begin{center}
\begin{tikzpicture}
  [level distance=14mm,
    level 1/.style={sibling distance=12mm},
    level 2/.style={sibling distance=12mm},
    level 3/.style={sibling distance=12mm},
    ]
\tikzset{every node/.style={minimum size = 8, inner sep=0}}  
\node[circle, draw,  fill = black]{$\phantom{.}$}
      child{node[circle, draw,  fill = black]{\phantom{.}}
%        child{node[minimum size = 15, inner sep=0, draw]{}}
%        child{node[minimum size = 15, inner sep=0, draw, dashed]{} edge from parent [dashed]}
        child{node[circle, draw,  fill = black]{}
           child{node[minimum size = 15, inner sep=0, draw]{}}
           child{node[minimum size = 15, inner sep=0, draw]{}}
        }
        child{node[minimum size = 15, inner sep=0, draw, dashed]{}
          edge from parent [dashed]}
      }
%      child{node[minimum size = 15, inner sep=0, draw, dashed]{} edge from parent [dashed]}
%      child{node[minimum size = 15, inner sep=0, draw, dashed]{} edge from parent [dashed]}
      child{node[minimum size = 15, inner sep=0, draw]{}};
\end{tikzpicture} 
\end{center}
\caption{A binary motif.}
\label{Fig:binarymotif}
\end{figure}

This motif occurs in a trie of size 3 with probability 
$6(p^3) (p^2 q) q = 6 p^5 q^2$, as given
in Remark~\ref{Remark:shapefunctional}. In a large trie of size $n$, this motif occurs 
$$\E[X_{n,T}] = \frac {   % WE TOOK THIS OUT(1 - p^3 - q^3)
p^5 q^2} 
          { h } \, n  + n \xi_T(n) + o(n)$$
times on average.
The function $\xi_T(.)$ is zero, when $\frac {\ln p} {\ln q}$ is irrational. When
$\frac {\ln p} {\ln q}$ is rational, $\xi_T(.)$ is an oscillating function. For example,
in the unbiased case $p = q = \frac 1 2$, where $\frac {\ln p} {\ln q} =1$,
this oscillating function is
$$ \xi_T (n) = {3  \over 512 \, \ln 2}\sum_{j\in Z\setminus\{0\}}
\Gamma\Bigl(2+{2\pi i j\over
 \ln 2}\Bigr)\exp(-2 \pi i j \log_2 n).$$
Uniformly in $n$ this function is bounded by
$0.4568554688\times 10^{-5}$.

For general $p$ and $q$, we have
\begin{align*}
\V[X_{n,T}] := v(n)&={n \over h} {p^5q^2}-{2n \over h} p^{10} q^4 \bigg[\frac 5 {16}
+\frac 1 {36} \sum_{j= 0}^\infty (-1)^j {p^{j+3} + q^{j+3} \over 1 -  p^{j+3} - q^{j+3}} \times {(j+4)! \over j!}\bigg] + n\delta_T(n)+o(n),
\end{align*}
where $\delta_T(.)$ is an %small 
oscillating function 
(identically zero in the case $\frac {\ln p} {\ln q}$ is
irrational).

The Gaussian law is
$$\frac {X_{n, T} - \Bigl(\displaystyle\frac {   %% WE TOOK THIS OUT(1 - p^3 - q^3) 
p^5 q^2} 
          { h }  +  \xi_{T}(n)\Bigr) \, n } {\sqrt {v(n)}} \ \convLaw \ \normal(0, 1).$$
\subsection{Two tries from DNA data}
In the hypervirus genome DNA model, the probabilities of the nucleotides
$A, C, T, G$ are respectively $0.15, 0.35, 0.35, 0.15$, and they are assumed to
be independent. 
This frequency distribution has the approximate entropy 1.304011483. 
The strands of DNA are very long
and the infinite string model provides an approximation. 
Let us use $T$ and $\widetilde{T}$ to denote the
motifs on the left and right (respectively) of Figure~\ref{Fig:motifs}.
The motif $T$ has shape functional $(4!)
(0.15^2) (0.15 \times 0.35) (0.15^2) (0.15) 
= 0.00009568125$,
and the motif $\widetilde T$ has shape functional
$(4!)(0.15)(0.35^2) (0.35^2) (0.15)  
= 0.0081034$.

Ignoring fluctuations,
in the trie of $n$ (very large) random DNA strands, we have
\begin{alignat*}{3}
\E[X_{n, T}] &\approx                0.000006115 \, n, 
  & &\qquad & \E[X_{n, \widetilde T}] &\approx 0.000517849 \, n,\\
\V[X_{n, T}] &\approx 0.000006114 \, n,
  & &\qquad & \V[X_{n, \widetilde T}] &\approx 0.000516520\, n,
\end{alignat*}
and
\begin{equation*}
\Cov\bigl[X_{n, T}, X_{n, \widetilde T}\bigr] = -1.56934066\times 10^{-8}\, n.
\end{equation*}
The distribution of the number of occurrences of these two motifs has 
approximately bivariate normal distribution.
$$\begin{pmatrix} X_{n,T}\\ X_{n,\widetilde T}  \end{pmatrix}
    \ \approxLaw \ \normal_2 \biggl(
        \begin{pmatrix} 0.000006115 \\
        0.000517849
        \end{pmatrix} \, n ,\begin{pmatrix}
        0.000006114 & -1.56934066\times 10^{-8} \\
        -1.56934066\times 10^{-8} & 0.000516520\end{pmatrix}\, n \biggr).$$
\subsection{The number of $\tau$--cousins}
Let $\collection$ be the collection of all $\tau$--cousins (all tries of size $\tau$).
For $\tau = 2$, there is only one 2--cousin ($\mathcal{I} = 1$). 
 (Here we are defining $\collection$ to just be
  $\tau$--cousins on the fringe, so these cherries will not have any
  extraneous strings at the top of the subtree, attached to them.)
However, for $\tau\ge 3$,
there is a countably infinite number of $\tau$--cousins, 
so we can take $\mathcal{I}$ to be the set of natural numbers.
Let $\mathcal{Y}_{n,\tau}$ be the number of $\tau$--cousins, so it is the linear combination
$$\mathcal{Y}_{n, \tau} = \sum_{\nu \in \indexingset} X_{n, T_\nu}.$$ 
According to the calculation of the average of a linear combination, we have  
$$\E[\mathcal{Y}_{n,\tau}] = \frac {\sum_{j=0}^\infty \Q(T_j)} 
          {\tau(\tau-1)h} \, n  + n \xi_\collection^*(n) + o(n) =  \frac {1 - \sum_{j=1}^m p_j^\tau}
          {\tau(\tau-1)h} \, n  + n \xi_\collection^*(n) + o(n),$$
where $\xi_\collection^*(.)$ is an %small 
oscillating function that collects all the individual oscillations.
We thus recover the result in~\cite{Ward}.
The variance of this linear combination (with all $\alpha$'s being 1) is
\begin{align*}\label{Eq:LinearVariance}
\V[\mathcal{Y}_{n,\collection}]&= \bigg({1 -  \sum_{j=1}^m p_j^\tau\over \tau(\tau-1)}\, -\frac 2 {(\tau!)^2}  \Bigl(1 -  \sum_{j=1}^m p_j^\tau\Bigr)^2\\
& \qquad \qquad {}\times \bigg[ \frac {(2\tau-2)!} {2^{-2\tau}}
+\sum_{j = 0}^\infty (-1)^j {\sum_{k=1}^m p_k^{\tau+j} \over 1 - \sum_{k=1}^m p_k^{\tau+j}} \times {(2\tau+j-2)! \over j!}\bigg]\bigg) \, \frac h n  + \delta_\collection^* (n)n\\
& \qquad {}-\bigg({1 -  \sum_{j=1}^m p_j^\tau \over \tau(\tau-1)h}+\widehat{\delta}_{\collection}^*(n)\bigg)^{2} n+o(n),
\end{align*}
where $\delta_\collection^* (.)$ and $\widehat\delta_\collection^*(.)$ are %small 
oscillating functions (possibly 0).

The number of $\tau$--cousins follows a Gaussian law:
$$\frac {\mathcal{Y}_{n,\tau} -  \Bigl(\displaystyle \frac {1 - \sum_{j=1}^m p_j^\tau} 
          {\tau(\tau-1)h} \,   +  \xi_\collection^* (n)\Bigr) \, n}   {\displaystyle\Big({1 -  \sum_{j=1}^m p_j^\tau \over \tau(\tau-1)h}+\widehat{\delta}_{\collection}^*(n)\Big)
          \sqrt n} \ \convLaw \ \normal(0,1).$$
          
The different $\tau$--cousins are countable, and can be enumerated appropriately.
We can call them $K_1, K_2, \ldots$, etc.
As a consequence of Theorem~\ref{Theo:main}, the number of cousins $X_{n,K_i}$,
for $i=1, 2,\ldots$, together have an asymptotic multivariate distribution.
For instance for 3--cousins, with a binary alphabet, we can think of $K_{2i-1}$ 
as being the cousin with one right leaf, and a left path of length $i$ then splitting
into two leaves, and take its mirror image as $K_{2i}$. 
With an aperiodic binary alphabet, the multivariate central limit theorem
takes the form
$$\frac {\begin{pmatrix} X_{n, K_1} \\
                         X_{n, K_2}\\
                         \vdots \end{pmatrix} - \begin{pmatrix}   p^3 q^2 \\
                         p^2 q^3\\
                         \vdots \end{pmatrix} \displaystyle \frac n {6h}} 
                             {\sqrt n} \ \convLaw \ \normal_2\left(\begin{pmatrix}   0 \\
                         0\\
                         \vdots \end{pmatrix} ,  
                             \begin{pmatrix}
                                       \sigma_{1,1}^2    &  \sigma_{1,2}^2& \cdots \\
                                       \sigma_{1,2}^2    &  \sigma_{2,2}^2& \cdots\\
                                      \vdots&\vdots&\ddots
                                      \end{pmatrix}\right),$$
where $h = -p \ln p - q \ln q$ is the entropy of the alphabet, and $\sigma_{i,j}$,
$1 \le i, j\le \infty$, are the linearity coefficients in the variances and covariances
given in Theorem~\ref{Theo:covar}.

\section{Acknowledgements}

The authors sincerely thank an anonymous referee for detailed and
insightful comments about the entire paper.  We acknowledge the
referee for improving the paper in many ways.
M.~D. Ward's research is supported by 
NSF Grant DMS-1246818, and by the NSF Science \& Technology Center for
Science of Information Grant CCF-0939370.

%\bibliographystyle{apt}
%\bibliography{cousinsbibfile}
\end{document}